\newtheorem{definition}{Definition}[]
\newtheorem{proposition}{Proposition}[]
\newtheorem{theorem}{Theorem}[]
\newtheorem{lemma}[]{Lemma}
\DeclareMathOperator{\E}{\mathbb{E}}
\def\O{{\mathcal O}}
\def\A{{\mathcal A}}
\def\B{{\mathcal B}}
\def\R{{\mathbb R}}
\def\E{\mathbb{E}}
\def\R{\mathbb{R}}
\def\h{\mathbf{h}}
\def\O{\mathcal{O}}
\newcommand{\RNum}[1]{\uppercase\expandafter{\romannumeral #1\relax}}
\renewcommand{\fnum@figure}{Fig.~\thefigure}
\title{\vspace{-8mm}\textbf{Optimization via First-Order Switching Methods: Skew-Symmetric Dynamics and Optimistic Discretization}}
\date{}
\author{Antesh Upadhyay, Sang Bin Moon, and Abolfazl Hashemi
\thanks{Authors are with the School of Electrical and Computer Engineering, Purdue University, West Lafayette, IN 47907, USA. This work was supported in part by the National Science Foundation (NSF) under Grant CNS-2313109.}
}
\begin{document}
\maketitle

\begin{abstract}
Large-scale constrained optimization problems are at the core of many tasks in control, signal processing, and machine learning. Notably, problems with functional constraints arise when, beyond a performance{\nobreakdash-}centric goal (e.g., minimizing the empirical loss), one desires to satisfy other requirements such as robustness, fairness, etc. A simple method for such problems, which remarkably achieves optimal rates for non-smooth, convex, strongly convex, and weakly convex functions under first-order oracle, is Switching Gradient Method (SGM): in each iteration depending on a predetermined constraint violation tolerance, use the gradient of objective or the constraint as the update vector. While the performance of SGM is well-understood for non-smooth functions and in fact matches its unconstrained counterpart, i.e., Gradient Descent (GD), less is formally established about its convergence properties under the smoothness of loss and constraint functions. In this work, we aim to fill this gap. First, we show that SGM may not benefit from faster rates under smoothness, in contrast to improved rates for GD under smoothness. By taking a continuous-time limit perspective, we show the issue is fundamental to SGM's dynamics and not an artifact of our analysis. Our continuous-time limit perspective further provides insights towards alleviating SGM's shortcomings. Notably, we show that leveraging the idea of optimism, a well-explored concept in variational inequalities and min-max optimization, could lead to faster methods. This perspective further enables designing a new class of ``soft'' switching methods, for which we further analyze their iteration complexity under mild assumptions.
\end{abstract}
\section{Introduction}\label{sec:intro}
Consider the canonical functional constrained problem
\begin{equation}\label{eq:mainproblem}
\tag{P}
    w^\ast \in \arg\min_{w\in \R^d} \;f(w) \qquad \text{such that }\quad g(w)\leq 0.
\end{equation}
Our goal will be to find an  $\epsilon$-solution as defined formally next.

\begin{definition}\label{def:solution}
A solution $\hat{w}\in \R^d$ is called an  $\epsilon$-solution for problem \eqref{eq:mainproblem} if 
    \begin{equation}\label{eq:sol-def}
    f(\hat{w})-f(w^\ast)\leq \epsilon, \qquad g(\hat{w})\leq \epsilon.
\end{equation}
\end{definition}
Functional constrained problems arise in numerous problems in signal processing and control \cite{scutari2016parallel,shi2020penalty,teo2021applied} as well as in machine learning, such as learning under robustness, safety, and fairness requirements \cite{huang2019stable,rigollet2011neyman,zafar2019fairness,huang2023oracle}.
This problem is extensively studied, and numerous elegant algorithms with provable guarantees under a variety of settings are developed \cite{bertsekas2014constrained}. While the majority of these methods are naturally primal-dual in nature \cite{chambolle2011first,zhang2022solving,boob2022stochastic,kim2024fast} or require sequential quadratic programming ideas \cite{nesterovintroductory,bertsekas2014constrained,scutari2016parallel2}, a remarkably simple and duality- and projection-free method due to Naum Shor \cite{shor2012minimization} achieves optimal rate of convergence under the black-box model \cite{nemirovskii1979complexity}, notably in the case where $f$ and $g$ are non-smooth. Called Switching Gradient Method (SGM), the method is a close cousin of the simplest optimization algorithm for unconstrained problems, namely Gradient Descent (GD), and performs the following iterations for some constraint violation tolerance $\epsilon>0$ and learning rate $\eta_t$:
\begin{equation}\label{eq:SGM}
\tag{SGM}
    w_{t+1} = w_t - \eta_t u_t, \qquad u_t = \begin{cases}
        \nabla f(w_t),\quad g(w_t)\leq \epsilon\\\nabla g(w_t),\quad g(w_t)> \epsilon.
    \end{cases}
\end{equation}

It is well known that after $T$ iterations, SGM can find an $\epsilon$-solution with $T=\O(\epsilon^{-2})$ when $f$ and $g$ are non-smooth convex functions \cite{shor2012minimization,nesterovintroductory,lan2020algorithms}.
Imposing strong convexity improves the iteration complexity to $T=\tilde{\O}(\epsilon^{-1})$.
Achieving these \emph{optimal rates} under the first-order oracle by such a simple method is attractive. Indeed, numerous extensions of SGM are explored, e.g. extension to the non-Euclidean case (mirror-descent version) \cite{bayandina2018mirror}, stochastic setting \cite{lan2020algorithms,liu2025single}, SGM with adaptive learning rates \cite{bayandina2018mirror}, methods for safe reinforcement learning \cite{xu2021crpo}, to name a few. Furthermore, motivated by learning under fairness constraints, \cite{huang2023oracle} has recently demonstrated the optimality of SGM for non-smooth non-convex problems where $f$ and $g$ satisfy the notion of weak convexity \cite{davis2019stochastic}.

\subsection{Contribution}\label{sec:contribution}
A close examination of the aforementioned results reveals that: (i) SGM's analyses naturally build upon those for GD under corresponding non-smooth settings, e.g., \cite{polyak1987introduction}, and (ii) SGM's iteration complexity's dependence on $\epsilon$ is identical to GD's for unconstrained problems under similar non-smooth and (strong/weak) convexity assumptions on $f$. These observations motivated us to explore the following simple question:
\begin{center}
    \it Q1: Does SGM, like GD, benefit from smoothness of $f$ and $g$ and achieve faster rates?
\end{center}
Not much is formally established about the convergence properties of SGM under smoothness, even in the simplest scenario where $f$ and $g$ are convex and $L$-smooth.
This paper aims to fill this gap and provide a principled foundation for further studies of first-order switching methods. 

In our pursuit towards answering the above question, we demonstrate that when $f$ and $g$ are convex and $L$-smooth, a simple adaptation of GD's analysis still results in the iteration complexity of $\O(\epsilon^{-2})$ in contrast to GD's  $\O(\epsilon^{-1})$.
To understand the source of this shortcoming, i.e., the inability to benefit from smoothness, we study SGM's dynamics from two perspectives: (i) viewing SGM as GD for an unconstrained problem with a non-differentiable penalty, and (ii) the continuous-time limit of a continuous interpolation of SGM's dynamics. Notably, these perspectives help us to identify the skew-symmetric and discontinuous nature of SGM's dynamics as the main factors preventing it from achieving faster rates. These results suggest that the iteration complexity of $\O(\epsilon^{-2})$ is in fact inherent, motivating us to explore the following question:
\begin{center}
    \it Q2: Are there simple, duality-free switching methods that could potentially benefit from the smoothness of $f$ and $g$?
\end{center}
In our pursuit to answer this question, our continuous-time limit perspective helps us to identify a connection to the literature on min-max optimization and variational inequalities. Much like SGM, the continuous-time dynamics underlying these problems are also generally skew-symmetric in nature, serving as the main reason for the lack of convergence of GD's counterpart (i.e., gradient descent ascent or GDA for min-max optimization tasks). The developments in variational inequality literature hinges crucially on the idea of optimism as a mechanism to correct the \emph{wrong direction} of the flow of skew-symmetric dynamics. 

Leveraging such a connection, we develop representative algorithms and establish their guarantees in several settings. Additionally, our investigations lead to a novel framework with  ``soft'' switching mechanisms to potentially design new duality-free first-order methods that use merely vector additions and leverage them in diverse settings where variants of \eqref{eq:mainproblem} play a crucial role. Notably, this feature of the new methods make them attractive for large- and huge-scale problems \cite{nesterov2014subgradient}.

\subsection{Related Work}\label{sec:related}
As our goal in this paper is to develop an understanding of switching methods, we next review the literature in this direction and refer the reader to
\cite{bertsekas2014constrained,chambolle2011first,scutari2016parallel2} and related follow-up work for alternative approaches.

Reference \cite{alkousa2020modification} introduces a variant of the adaptive mirror descent method integrated in SGM for convex and non-smooth problems. By selectively evaluating stochastic sub-gradients of only violated constraints, the modified scheme reduces per-iteration cost while retaining the optimal complexity.
In \cite{stonyakin2019adaptive,stonyakin2019some}, the authors develop adaptive SGM methods that automatically adjust to the unknown Lipschitz constants for both the convex and strongly convex cases.
SGM is further extended and studied in \cite{stonyakin2020mirror}, which introduces novel step-size rules and adaptive stopping criteria. Extensions under an inexact model as well as in the online setting are further studied by \cite{titov2020analogues,titov2018mirror}. In \cite{bayandina2018mirror}, the authors develop adaptive mirror descent with switching sub-gradient rules and stopping criteria. While they study the case where $f$ could be smooth, in their analyses, they assume the non-smoothness of $g$.

More recently, \cite{lan2020algorithms} extended SGM to the stochastic setting and derived various iteration complexity results under convexity and strong convexity.
Further, \cite{huang2023oracle} extends the analysis of SGM to the non-convex setting where both $f$ and $g$ are weakly convex and establishes the optimal complexity of SGM for finding an $\epsilon$-nearly stationary point. Finally, concurrent to our work, \cite{liu2025single} studied a variant of SGM for stochastic problems with weakly convex loss and constraint functions. Their method is based on using a differentiable penalty function mirroring our discussion in Section \ref{sec:sgm-penality}, but the resultant method is different from those we explore in this paper.

\subsection{Organization}\label{sec:org}
The rest of the paper is organized as follows. Section \ref{sec:background} provides a brief overview of relevant concepts. Section \ref{sec:SGM} provides the analyses of SGM, particularly under the smoothness assumption. Section \ref{sec:soft} discusses the proposed technique to develop new first-order switching methods with a ``soft'' switching mechanism. Section \ref{sec:OSGM} proposes the idea of leveraging optimistic discretization and designing new methods. Section \ref{sec:exp} reports some numerical experiments verifying the theoretical results, while Section \ref{sec:conclusion} states some concluding remarks. Finally, the detailed proofs of the main theorems are presented in the appendices.

\section{Background}\label{sec:background}
We start by stating some notation.
Let $[T]:= \{1,\dots,T\}$ and $|\A|$ denote the cardinality of set $\A$. Furthermore, let $\mathbb{I}\{\cdot\}$ denote the standard (binary-valued) indicator function, i.e.,
\begin{equation}\label{eq:indicator}
   \mathbb{I}(x) =\begin{cases}
       1,\quad x>0\\ 0,\quad x\leq 0,
   \end{cases}
\end{equation}
and $[\cdot]_+ =\max\{0,\cdot\}$.
Next we review the necessary definitions.
\begin{definition}[Convexity, Lipschitzness, and Smoothness]
    For all $x,y \in \R^d$ a (differentiable) function is
    \begin{itemize}
        \item Convex if
        \begin{equation}
            f(y)\geq f(x)+\langle \nabla f(x),y-x \rangle.
        \end{equation}
         \item $G$-Lipschitz  for some $G>0$ if
           \begin{equation}
           |f(x)-f(y)|\leq  G\|x-y\| \quad \equiv  \quad  \|\nabla f(x)\|\leq G.
        \end{equation}
        \item $L$-smooth  for some $L>0$ if
                   \begin{equation}
           \|\nabla f(x)-\nabla f(y)\|\leq  L\|x-y\|.
        \end{equation}
    \end{itemize}
\end{definition}
We remark that the definitions of convexity and Lipschitzness stated above do not require differentiability, and one simply needs to replace $\nabla f(x)$ with a subgradient of $f$ at $x$, i.e. $u \in \partial f(x)$. In this paper, however, for simplicity of notation and derivations, we simply assume differentiability and use $\nabla f(x)$.

Finally, we remark on the generality of problem \eqref{eq:mainproblem} in terms of handling multiple constraints, including functional equality constraints.  If multiple constraints $g_i(w)\leq 0$ exist, one can let $g(w) = \max_i g_i(w)$. Furthermore, in the case of equality constraints $h(w) = 0$, we could equivalently and simultaneously impose $h(w)\leq 0$ and $-h(w)\leq 0$.

\section{Demystifying SGM's Iteration Complexity under Smoothness}\label{sec:SGM}
In this section, we analyze SGM under two settings and provide Theorems \ref{thm:sgm1} and \ref{thm:sgm2}. The first theorem aims to facilitate the ensuing analyses and further improve the existing results in certain aspects, e.g., improving constants and bringing the proof as close as possible to that of GD. The second theorem, to the best of our knowledge, is not formally stated in the literature. Afterwards, we provide two perspectives highlighting the inherent shortcomings of SGM when it comes to harnessing smoothness.

\subsection{SGM for Convex and Lipschitz Functions}\label{sec:sgm-nonsmooth}
We start by stating our first result regarding the convergence properties of SGM for convex and non-smooth problems. Note that while this setting is well studied, e.g., Chapter 3.2.4 in \cite{nesterovintroductory} and \cite{shor2012minimization,lan2020algorithms},  here we state a different version with a different proof for completeness.
\begin{theorem}\label{thm:sgm1}
    Consider the problem in \eqref{eq:mainproblem} and SGM in \eqref{eq:SGM}. Assume $f$ and $g$ are convex and $G$-Lipschitz. Define $D:=\|w_1-w^\ast\|$ and 
    \begin{equation}
       \A = \{t\in [T]\;| \;g(w_t)\leq \epsilon\},\qquad \bar{w} = \frac{1}{|\A|} \sum_{t\in \A} w_t.
    \end{equation}
            Then, if
                \begin{equation}
        \epsilon = \frac{DG}{\sqrt{T}},\qquad \eta_t = \eta =\frac{D}{G\sqrt{T}},
    \end{equation}
            it holds that $\A$ is nonempty, $\bar{w}$ is well-defined, and $\bar{w}$ is an $\epsilon$-solution for \eqref{eq:mainproblem}.
\end{theorem}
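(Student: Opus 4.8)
The plan is to run the standard one-step contraction argument from subgradient descent, but to bookkeep the iterations according to whether they use $\nabla f$ or $\nabla g$. First I would expand $\norm{w_{t+1}-w^\ast}^2 = \norm{w_t - \eta u_t - w^\ast}^2$ and split into the two cases. For $t \in \A$ (where $u_t = \nabla f(w_t)$), convexity of $f$ gives $\langle \nabla f(w_t), w_t - w^\ast\rangle \geq f(w_t)-f(w^\ast)$ and $G$-Lipschitzness gives $\norm{\nabla f(w_t)}^2 \leq G^2$, yielding
\[
\norm{w_{t+1}-w^\ast}^2 \leq \norm{w_t-w^\ast}^2 - 2\eta\big(f(w_t)-f(w^\ast)\big) + \eta^2 G^2.
\]
For $t \notin \A$ (where $u_t = \nabla g(w_t)$ and $g(w_t) > \epsilon$), convexity of $g$ together with feasibility $g(w^\ast)\leq 0$ gives $\langle \nabla g(w_t), w_t - w^\ast\rangle \geq g(w_t) - g(w^\ast) \geq g(w_t) > \epsilon$, so
\[
\norm{w_{t+1}-w^\ast}^2 < \norm{w_t-w^\ast}^2 - 2\eta\epsilon + \eta^2 G^2.
\]

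Next I would telescope over $t=1,\dots,T$ and drop the nonnegative term $\norm{w_{T+1}-w^\ast}^2$ to obtain the master inequality
\[
2\eta \sum_{t\in\A}\big(f(w_t)-f(w^\ast)\big) + 2\eta\sum_{t\notin\A} g(w_t) \leq D^2 + T\eta^2 G^2.
\]
The coupled choice of step size and tolerance makes the constants collapse: $T\eta^2 G^2 = D^2$, $2\eta T\epsilon = 2D^2$, and $D^2/\eta = DG\sqrt{T} = T\epsilon$. To establish $\A \neq \emptyset$, I would argue by contradiction: if $g(w_t)>\epsilon$ for every $t$, then telescoping the second inequality alone forces $\norm{w_{T+1}-w^\ast}^2 < D^2 - 2\eta T\epsilon + T\eta^2 G^2 = 0$, which is impossible. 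Hence $\A$ is nonempty and $\bar w$ is well-defined.

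For the guarantee on $\bar w$, feasibility is immediate from Jensen's inequality: since $g$ is convex and $g(w_t)\leq \epsilon$ for every $t\in\A$, we get $g(\bar w) \leq \tfrac{1}{|\A|}\sum_{t\in\A} g(w_t) \leq \epsilon$. For optimality, lower-bounding $\sum_{t\notin\A} g(w_t) \geq (T-|\A|)\epsilon$ in the master inequality isolates
\[
\sum_{t\in\A}\big(f(w_t)-f(w^\ast)\big) \leq \frac{D^2+T\eta^2 G^2}{2\eta} - (T-|\A|)\epsilon.
\]
Substituting the closed-form constants turns the right-hand side into $T\epsilon - (T-|\A|)\epsilon = |\A|\epsilon$; dividing by $|\A|$ and applying Jensen's inequality $f(\bar w)\leq \tfrac{1}{|\A|}\sum_{t\in\A} f(w_t)$ then yields $f(\bar w)-f(w^\ast)\leq \epsilon$, completing the $\epsilon$-solution claim.

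The main obstacle is not any single inequality but the exact algebraic cancellation in the last step: the descent contribution $DG\sqrt{T}=T\epsilon$ and the penalty contribution $(T-|\A|)\epsilon$ from the violated iterations must combine to exactly $|\A|\epsilon$, and this hinges entirely on the tuned pair $\epsilon = DG/\sqrt{T}$, $\eta = D/(G\sqrt{T})$. I would also be careful that the \emph{strict} inequality on violated steps is what genuinely rules out $\A=\emptyset$, since with these constants the non-strict bound only delivers $\norm{w_{T+1}-w^\ast}^2 \leq 0$.
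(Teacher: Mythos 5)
Your proof is correct and follows essentially the same route as the paper's: expanding the squared distance (the paper's polarization identity), telescoping, splitting iterations into $\A$ and its complement, using convexity together with feasibility $g(w^\ast)\le 0$ and the strict bound $g(w_t)>\epsilon$ on violated steps, and closing with Jensen's inequality under the same tuned constants $\epsilon = DG/\sqrt{T}$, $\eta = D/(G\sqrt{T})$. The only cosmetic differences are that you obtain nonemptiness of $\A$ by driving $\|w_{T+1}-w^\ast\|^2$ strictly below zero rather than contradicting the averaged regret bound, and you skip the paper's two-case analysis on the sign of $\sum_{t\in\A}\big(f(w_t)-f(w^\ast)\big)$, which your direct isolation of that sum renders unnecessary.
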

The above theorem establishes that SGM has an optimal iteration complexity of $\O(\epsilon^{-2})$. We also note that instead of establishing the convergence of the average iterate, one could select $t_s \in \A$ uniformly at random and, using a standard technique (see e.g., \cite{ghadimi2013stochastic}), show $w_{t_s}$ is an expected $\epsilon$-solution.
\subsection{SGM for Convex and Smooth Functions}\label{sec:sgm-smooth}
Next, we state our result regarding the convergence properties of SGM for convex and smooth problems. Note that this result seems to be stated formally for the first time.
\begin{theorem}\label{thm:sgm2}
    Consider the problem in \eqref{eq:mainproblem} and SGM in \eqref{eq:SGM}. Assume $f$ and $g$ are convex and $L$-smooth. Define $D:=\|w_1-w^\ast\|$ and the following
    \begin{equation}
    \begin{aligned}
                \Tilde{f}&:= \min_x f(x) >-\infty,\qquad\Tilde{g}:= \min_x g(x) >-\infty,\\\Delta_{\max} &:= \max\{f(w^\ast)-\Tilde{f},g(w^\ast)-\Tilde{g}\}\geq 0,
    \end{aligned}
    \end{equation}
    and
        \begin{equation}
       \A = \{t\in [T]\;| \;g(w_t)\leq \epsilon\},\qquad \bar{w} = \frac{1}{|\A|} \sum_{t\in \A} w_t.
    \end{equation} Then if 
            \begin{equation}
            \epsilon = \frac{2LD^2}{T}+\sqrt{\frac{8LD^2\Delta_{\max}}{T}},\quad \eta = \min\Big\{\frac{1}{2L}, \sqrt{\frac{D^2}{2L\Delta_{\max}T}}\big\},
        \end{equation}
        it holds that $\A$ is nonempty, $\bar{w}$ is well-defined, and $\bar{w}$ is an $\epsilon$-solution for \eqref{eq:mainproblem}.
        \end{theorem}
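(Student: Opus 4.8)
The plan is to run a Gradient-Descent--style potential argument on $\|w_t - w^\ast\|^2$, but to replace the uniform gradient bound $\|\nabla\phi\|\le G$ used in the Lipschitz case of Theorem \ref{thm:sgm1} with the smoothness inequality $\|\nabla\phi(w)\|^2\le 2L\big(\phi(w)-\min_x\phi(x)\big)$, valid for any $L$-smooth $\phi$ bounded below (it follows by applying the descent lemma at the point $w-\frac1L\nabla\phi(w)$ and lower-bounding the result by $\min_x\phi$). Write $\B:=[T]\setminus\A$, so that $u_t=\nabla f(w_t)$ for $t\in\A$ and $u_t=\nabla g(w_t)$ for $t\in\B$, and let $\phi_t$ denote the active function at step $t$. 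First I would convert the above into a single per-step estimate: convexity gives $\phi_t(w_t)-\phi_t(w^\ast)\le\langle u_t,w_t-w^\ast\rangle$, and since $f(w^\ast)-\tilde f\le\Delta_{\max}$ and $g(w^\ast)-\tilde g\le\Delta_{\max}$, combining these with the smoothness inequality yields the key bound $\|u_t\|^2\le 2L\langle u_t,w_t-w^\ast\rangle+2L\Delta_{\max}$.

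Substituting this into the exact expansion $\|w_{t+1}-w^\ast\|^2=\|w_t-w^\ast\|^2-2\eta\langle u_t,w_t-w^\ast\rangle+\eta^2\|u_t\|^2$ collapses the two inner-product terms and gives $\|w_{t+1}-w^\ast\|^2\le\|w_t-w^\ast\|^2-2\eta(1-L\eta)\langle u_t,w_t-w^\ast\rangle+2L\eta^2\Delta_{\max}$, where $\eta\le\frac1{2L}$ forces the coefficient $2\eta(1-L\eta)\ge\eta>0$. Telescoping over $t\in[T]$ and discarding the nonnegative terminal term leaves $2\eta(1-L\eta)\sum_{t=1}^T\langle u_t,w_t-w^\ast\rangle\le D^2+2L\eta^2\Delta_{\max}T$. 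I would then lower-bound the inner products termwise by convexity: on $\A$, $\langle u_t,w_t-w^\ast\rangle\ge f(w_t)-f(w^\ast)$, while on $\B$, $\langle u_t,w_t-w^\ast\rangle\ge g(w_t)-g(w^\ast)\ge g(w_t)>\epsilon$ using feasibility $g(w^\ast)\le 0$. This produces the master inequality $2\eta(1-L\eta)\big(\sum_{t\in\A}(f(w_t)-f(w^\ast))+|\B|\epsilon\big)\le D^2+2L\eta^2\Delta_{\max}T$.

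Everything then reduces to the single scalar inequality $(\star)$: $D^2+2L\eta^2\Delta_{\max}T\le 2\eta(1-L\eta)T\epsilon$. Granting $(\star)$, all three conclusions follow cleanly. If $\A$ were empty the master inequality would read $2\eta(1-L\eta)T\epsilon\le D^2+2L\eta^2\Delta_{\max}T$, directly contradicting $(\star)$, so $\A\ne\emptyset$ and $\bar w$ is well-defined. Rearranging the master inequality, dividing by $2\eta(1-L\eta)|\A|$, invoking $(\star)$ in the form $\big(D^2+2L\eta^2\Delta_{\max}T\big)/\big(2\eta(1-L\eta)\big)\le T\epsilon$, and using $T=|\A|+|\B|$ gives $\frac1{|\A|}\sum_{t\in\A}(f(w_t)-f(w^\ast))\le\epsilon$, whence Jensen's inequality and convexity of $f$ yield $f(\bar w)-f(w^\ast)\le\epsilon$. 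Finally, since $g(w_t)\le\epsilon$ for every $t\in\A$ by definition of $\A$, convexity of $g$ gives $g(\bar w)\le\frac1{|\A|}\sum_{t\in\A}g(w_t)\le\epsilon$, completing the $\epsilon$-solution certificate.

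The main obstacle is the verification of $(\star)$ for the prescribed $\epsilon$ and $\eta$, which I would handle in the two regimes determined by the minimum in $\eta$. When $\eta=\frac1{2L}$ (the regime $\Delta_{\max}T\le 2LD^2$), using $2\eta(1-L\eta)=\eta$ reduces $(\star)$ to $\tfrac{2LD^2}{T}+\Delta_{\max}\le\epsilon$, and the stated $\epsilon$ handles this precisely because $\Delta_{\max}\le\tfrac{2LD^2}{T}\le\tfrac{8LD^2}{T}$ implies $\Delta_{\max}\le\sqrt{8LD^2\Delta_{\max}/T}$. When $\eta=\sqrt{D^2/(2L\Delta_{\max}T)}$, the term $2L\eta^2\Delta_{\max}T$ equals exactly $D^2$, so $(\star)$ becomes $2D^2\le 2\eta(1-L\eta)T\epsilon$, and bounding $1-L\eta\ge\frac12$ together with $\epsilon\ge 2D\sqrt{2L\Delta_{\max}/T}$ makes $\eta T\epsilon\ge 2D^2$ hold with equality in the dropped terms, so the inequality is met (and strictly for $D>0$, which also secures $\A\ne\emptyset$ above). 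I expect these case computations — essentially reverse-engineering where the two summands of $\epsilon$ come from — to be the fiddly part; conceptually, the irreducible additive term $2L\eta^2\Delta_{\max}T$, which arises because steps on $\B$ decrease $\|w_t-w^\ast\|$ against a point $w^\ast$ that does not minimize $g$ (and symmetrically for $f$), is exactly the $\Theta(\eta^2T)$ error that forces $\epsilon=\Omega(T^{-1/2})$ and thus prevents SGM from inheriting GD's faster smooth rate.
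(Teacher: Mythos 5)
Your proposal is correct and follows essentially the same route as the paper's proof: both rest on the smoothness bound $\|\nabla\phi(w)\|^2\le 2L(\phi(w)-\min_x\phi(x))$, the same $\pm\phi(w^\ast)$ splitting that converts $\phi_t(w_t)-\tilde\phi_t$ into a suboptimality term (absorbed into the factor $1-\eta L$) plus $\Delta_{\max}$, the same two-regime step-size analysis, and the same Theorem~\ref{thm:sgm1}-style averaging/nonemptiness argument at the end. The only differences are organizational — you apply the smoothness bound per step and isolate an explicit scalar target $(\star)$, whereas the paper sums first and sets $\epsilon$ equal to the resulting bound — and these do not change the substance of the argument.
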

    The above theorem establishes that SGM has an iteration complexity of $\O(\epsilon^{-2})$, despite the smoothness of $f$ and $g$. This rate falls short of the complexity of GD for this setting, i.e., $\O(\epsilon^{-1})$ \cite{nesterovintroductory} unless $\Delta_{\max} = 0$. 
    
    A careful reader notices that the result of Theorem \ref{thm:sgm2} resembles the standard, unconstrained convergence results of Stochastic Gradient Descent (SGD) if we interpret $\Delta_{\max}$ as the variance of the stochastic oracle \cite{ghadimi2013stochastic}. The ``variance'' here, however, does not arise from a stochastic estimation of the gradients of $f$ and $g$; rather, it arises due to SGM's dynamics. Let us discuss this point somewhat informally next.

    In particular, we show that SGM can be thought of as SGD for the dual of problem \eqref{eq:mainproblem}, i.e.
    \begin{equation}
        \min_w \max_{\lambda \geq 0} f(w)+\lambda g(w).
    \end{equation}
Let $\lambda^\ast \ge 0$ be optimal dual variable. Suppose $\lambda^\ast$ is not a function of $w$ (which is not correct in general). Then, 
\begin{equation}\label{eq:reduced-dual}
 \min_w  f(w)+\lambda^\ast g(w) \quad \equiv \quad  \min_w \frac{1}{1+\lambda^\ast} f(w)+\frac{\lambda^\ast}{1+\lambda^\ast} g(w),
\end{equation}
and the update of GD for \eqref{eq:reduced-dual} is as follows
        \begin{equation}
        \begin{aligned}
            w_{t+1} = w_t -\eta \Bigg(\frac{1}{1+\lambda^\ast} \nabla f(w_t)+\frac{\lambda^\ast}{1+\lambda^\ast} \nabla g(w_t)\Bigg).
        \end{aligned}
    \end{equation}
    Let $z \in \{1,2\}$ be a random variable such that $\mathbb{P}(z=1) = \frac{1}{1+\lambda^\ast}$, and $\mathbb{P}(z=2) = \frac{\lambda^\ast}{1+\lambda^\ast}$. Define the stochastic oracle
    \begin{equation}
        \nabla\tilde{\ell}(w_t,z) = \begin{cases}
          \nabla f(w_t) \quad w.p. \quad \frac{1}{1+\lambda^\ast}\\
          \nabla g(w_t) \quad w.p. \quad \frac{\lambda^\ast}{1+\lambda^\ast}.
        \end{cases}
    \end{equation}
    Consequently, GD's update can be written equivalently as
            \begin{equation}
        \begin{aligned}
            w_{t+1} = w_t -\eta \E[\nabla\tilde{\ell}(w_t,z)].
        \end{aligned}
    \end{equation}
    It is now evident that SGM acts as a stochastic approximation of the above update: when $w_t$ is significantly infeasible, $\lambda^\ast$ is large such that it is more likely for the oracle to return $\nabla g(w_t) $ as the gradient estimator. Conversely, when $w_t$ is near-feasible, $\lambda^\ast$ is small, such that it is more likely for the oracle to return $\nabla f(w_t) $ as the gradient estimator.

    We again emphasize that the above discussion is informal and its main goal is to provide an intuitive way to interpret the result of Theorem \ref{thm:sgm2} in light of those for SGD. Much like SGD that with and without smoothness it achieves a complexity of $\O(\epsilon^{-2})$ for convex problems due to the detrimental impact of the variance, SGM cannot enjoy faster rates under smoothness due to the detrimental impact of $\Delta_{\max}$ either.

    Let us also remark on the assumption that $\tilde{f}$ and $\tilde{g}$ are bounded from below, as it seemingly excludes affine functions for which the minimum value is $-\infty$. Note for affine functions $L=0$ such that $L\times \Delta_{\max} = 0\times \infty $, which can be defined to be $0$. Indeed, there are no smooth and convex functions such that $\Delta_{\max}=\infty$ while $L\neq 0$.
\subsection[SGM as GD for Unconstrained Problems with Non-differentiable Penalty]%
{SGM as GD for Unconstrained Problems with\\ Non-differentiable Penalty}\label{sec:sgm-penality}
Next, we provide an argument suggesting the complexity of $\O(\epsilon^{-2})$ derived by Theorem \ref{thm:sgm2} is inherent.

Consider the following unconstrained problem
\begin{equation}\label{eq:surrogateproblem}
    \min_w f(w)+\rho[g(w)]_+,\qquad \rho >0.
\end{equation}
Evidently, for any $w\in\R^d$ which is feasible for \eqref{eq:mainproblem} it holds that $g(w)\leq 0$, implying the objective in \eqref{eq:surrogateproblem} becomes $f(w)$. Further, the two problems have the same solutions for large enough $\rho$ (see, e.g., Chapter 4.1 in \cite{bertsekas2014constrained}). Let us now consider the following modification
\begin{equation}\label{eq:surrogateproblemeps}
    \min_w f(w)+\rho[g(w)-\epsilon]_+,\qquad \rho,\epsilon >0.
\end{equation}
It is evident that when $g(w)\leq \epsilon$ the (sub)gradient is $\nabla f(w)$ while when $g(w)> \epsilon$ the (sub)gradient is $\nabla f(w)+\rho \nabla g(w)$. Thus, for large $\rho$ such that $\nabla f(w)+\rho \nabla g(w) \approx \rho \nabla g(w)$, GD for \eqref{eq:surrogateproblemeps}, i.e.
\begin{equation}\label{eq:SGMapprox}
\begin{aligned}
    w_{t+1} &= w_t - \eta_t u_t\\u_t&= \nabla f(w_t)+\mathbb{I}(g(w_t)-\epsilon)\rho \nabla g(w_t),
\end{aligned}
\end{equation}
approximates SGM's dynamics given in \eqref{eq:SGM}. 

As the $\max$ operator is non-differentiable, the complexity of GD for \eqref{eq:surrogateproblemeps} is generally $\O(\epsilon^{-2})$ under the first-order oracle despite the smoothness of $f$ and $g$. Thus, the complexity of SGM for \eqref{eq:mainproblem} is generally $\Omega(\epsilon^{-2})$ as well, suggesting the tightness of the result of Theorem \ref{thm:sgm2}. 

One might expect to achieve $\O(\epsilon^{-1})$ for \eqref{eq:surrogateproblemeps} by leveraging accelerated methods with complexity $\O(\sqrt{\Tilde{L}/\epsilon})$ \cite{nesterov1983method} along with smooth approximations such that $\Tilde{L} = \O(\text{poly}(\rho,d,\epsilon^{-1}))$ \cite{nesterov2005smooth,nesterov2017random}, thereby developing a simple approach to modify SGM and improve its complexity result. However, to ensure finding an $\epsilon$-solution one must set $\rho = \epsilon^{-\alpha}$ for some $\alpha>0$. Thus, this strategy may fail to attain rates faster than $\O(\epsilon^{-2})$.
\subsection{Continuous-Time Limit of SGM's Dynamics}\label{sec:sgm-dynamics}
We now provide an alternative view that further supports the tightness of the result of Theorem \ref{thm:sgm2} and could aid us in designing algorithms with possibly better rates.

Consider the following Ordinary Differential Inclusion (ODI)
\begin{equation}
\begin{aligned}
    \dot{w} &\in -\tilde{F}(w)\\
    \tilde{F}(w)&= \mathbb{I}(g(w)-\epsilon) \nabla g(w)+ (1-\mathbb{I}(g(w)-\epsilon)) \nabla f(w),
\end{aligned}
\end{equation}
which is readily seen to represent the continuous-time limit of SGM's dynamics in \eqref{eq:SGM}. Note that $\tilde{F}(w)$ is discontinuous in $w$. Thus, the conditions of Caratheodory's existence theorem \cite{o1997existence} are not met, and the ODI may not even have a solution near any initialization. This again highlights the unfavorable structure of SGM's dynamics. Accordingly, we instead consider the following Ordinary Differential Equation (ODE)
\begin{equation}\label{eq:approxODE}
\begin{aligned}
     \dot{w} &= - F(w)\\
     F(w)&=\sigma_\beta(g(w)-\epsilon) \nabla g(w)+ [1-\sigma_\beta(g(w)-\epsilon)]\nabla f(w)\\
     \sigma_\beta(x)&= \frac{1}{1+\exp(-\beta x)},
\end{aligned}
\end{equation}
for some smoothing or temperature parameter $\beta>0$. Note that $\sigma_\beta(x)$ is the sigmoid function.

Evidently, as $\beta\rightarrow \infty$, the dynamics in \eqref{eq:approxODE} approaches that of SGM's. The proposed ODE in \eqref{eq:approxODE} has potential benefits over the continuous-time limit of SGM's dynamics. Firstly, it is continuous in $w$; thus, by Caratheodory's existence theorem, it is guaranteed to have a solution near any initialization. Secondly, basic calculations show that under the assumption that $f$ and $g$ are smooth and Lipschitz, $F(w)$ will be a Lipschitz operator. Consequently, \eqref{eq:approxODE} is further guaranteed to have a unique solution by the Picard–Lindelöf theorem \cite{o1997existence}.

Next, we aim to study the properties of the Jacobian of the above flow in \eqref{eq:approxODE}, which is denoted by $J_{F}(w)$.
Jacobian for such an ODE describes how the flow locally curves, rotates, contracts, and expands. It is well-known that when the Jacobian is purely symmetric (as is the case with the gradient flow $\dot{w} = -\nabla f(w)$), the flow does not exhibit any rotational behavior due to the fact that the underlying physical system is dissipative \cite{muehlebach2021optimization}. On the other hand, when the Jacobian has a skew-symmetric component (e.g., the flow of bilinear min-max games or, equivalently, the harmonic oscillator $\ddot{w} = -w$), the flow may exhibit rotation as a result of conserving energy.  The next proposition establishes that despite the favorable (Lipchitz) continuity structure, the dynamics in \eqref{eq:approxODE} is inherently skew-symmetrical.
\begin{proposition}\label{propositionh:jacobian}
    Consider the ODE given by \eqref{eq:approxODE}. We have
    \begin{equation}
    \begin{aligned}
                J_{F}(w) &= \nabla^2 f(w)+\sigma_\beta(g(w)-\epsilon) \big(\nabla^2 g(w)-\nabla^2 f(w)\big)\\&\qquad+\sigma^\prime_\beta(g(w)-\epsilon) \big(\nabla g(w)-\nabla f(w)\big)(\nabla g(w))^\top.
    \end{aligned}
    \end{equation}
    Consequently, as long as $\nabla g(w)$ and $\nabla f(w)$ are linearly independent, $J_{F}(w)$ has a skew-symmetric component.
\end{proposition}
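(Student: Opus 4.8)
The plan is to first compute the Jacobian by direct differentiation and then isolate its skew-symmetric part. Writing $\sigma := \sigma_\beta(g(w)-\epsilon)$ for brevity, I would treat $F(w) = \sigma\,\nabla g(w) + (1-\sigma)\nabla f(w)$ as a sum of products of the scalar field $\sigma$ with the vector fields $\nabla g$ and $\nabla f$. Applying the product rule together with the chain rule---using $\nabla_w \sigma = \sigma'_\beta(g(w)-\epsilon)\,\nabla g(w)$, which follows by differentiating the sigmoid through its argument---each of the two summands produces a Hessian term scaled by $\sigma$ (respectively $1-\sigma$) plus a rank-one term pairing the respective gradient with $\nabla_w\sigma$. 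Collecting the rank-one pieces gives $\sigma'_\beta(g(w)-\epsilon)(\nabla g(w)-\nabla f(w))(\nabla g(w))^\top$, and rewriting $(1-\sigma)\nabla^2 f + \sigma\,\nabla^2 g = \nabla^2 f + \sigma(\nabla^2 g - \nabla^2 f)$ recovers exactly the stated expression for $J_F(w)$.

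For the second claim, I would decompose $J_F(w) = \tfrac12(J_F+J_F^\top) + \tfrac12(J_F-J_F^\top)$. Since Hessians are symmetric, the terms $\nabla^2 f(w) + \sigma(\nabla^2 g(w)-\nabla^2 f(w))$ contribute nothing to the skew-symmetric part. Hence the entire skew-symmetric component arises from the rank-one term $M := \sigma'_\beta(g(w)-\epsilon)(\nabla g(w)-\nabla f(w))(\nabla g(w))^\top$ and equals $\tfrac12(M-M^\top)$. Abbreviating $a := \nabla g(w)-\nabla f(w)$, $b := \nabla g(w)$, and $c := \sigma'_\beta(g(w)-\epsilon)$, this skew part is $\tfrac{c}{2}(ab^\top - ba^\top)$.

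The crux of the argument is then a clean rank-one fact: the outer product $ab^\top$ is symmetric (equivalently $ab^\top = ba^\top$) if and only if $a$ and $b$ are linearly dependent. I would verify the nontrivial direction by choosing, when $a$ and $b$ are independent, a test vector $v \perp b$ with $a^\top v \neq 0$, so that $ab^\top v = 0$ while $ba^\top v \neq 0$, forcing $ab^\top \neq ba^\top$. Two remaining ingredients complete the chain: first, $c = \sigma'_\beta(\cdot) = \beta\,\sigma(1-\sigma) > 0$ for every argument, so the coefficient never annihilates the term; and second, linear independence of $\{\nabla f(w),\nabla g(w)\}$ is equivalent to that of $\{a,b\}=\{\nabla g(w)-\nabla f(w),\nabla g(w)\}$, since the map $(\nabla f,\nabla g)\mapsto(\nabla g-\nabla f,\nabla g)$ is invertible and preserves the spanned subspace. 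Combining these, whenever $\nabla f(w)$ and $\nabla g(w)$ are linearly independent we get $ab^\top - ba^\top \neq 0$, so $J_F(w)$ carries a nonzero skew-symmetric component.

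I expect the only genuine obstacle to be the rank-one symmetry characterization in the third paragraph; the Jacobian computation is routine calculus, and the equivalence of the two linear-independence conditions is a one-line basis-change observation.
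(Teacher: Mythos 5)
Your proposal is correct and follows essentially the same route as the paper: compute $J_F$ by the product and chain rules, observe that the Hessian terms are symmetric so the skew-symmetric part comes entirely from the rank-one term, and invoke the fact that $ab^\top = ba^\top$ iff $a$ and $b$ are linearly dependent. The only differences are cosmetic---you verify that rank-one fact with a test vector $v \perp b$ whereas the paper multiplies both sides by $a^\top$, and you make explicit two points the paper leaves implicit (that $\sigma'_\beta = \beta\sigma(1-\sigma) > 0$ never vanishes, and that independence of $\{\nabla g - \nabla f, \nabla g\}$ is equivalent to independence of $\{\nabla f, \nabla g\}$).
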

\begin{proof}
    The Jacobian can be readily found by applying the chain rule and matrix calculus.
    Clearly the only non-symmetric term is the outer product $\nabla f(w) (\nabla g(w))^\top$. Evidently, $ab^\top = ba^\top$ iff $a$ and $b$ are linearly dependent (to see this, multiply both sides by $a^\top$ and rearrange).
\end{proof}
Proposition \ref{propositionh:jacobian} establishes that the flow is generally skew-symmetrical and the skew-symmetric component gets stronger as $\beta \rightarrow \infty$ (corresponding to the derivative of $\sigma_\beta(\cdot)$) and $|g(w)| \rightarrow \epsilon$, i.e., as the proposed ODE in \eqref{eq:approxODE} approaches the continuous-time limit of SGM's dynamics and the solution approaches the boundary of the feasible set.

Consider an arbitrary flow $\dot{w} = - F(w)$ which by taking the integral can be written equivalently as
    \begin{equation}
	    w_{t+1} = w_t - \int_{t}^{t+1}  F(w_\tau) \cdot d\tau.
	\end{equation}
     The simple forward Euler discretization approximates \cite{butcher2016numerical}
        \begin{equation}
        \begin{aligned}
            	    w_{t+1} &= w_t - \int_{t}^{t+1} F(w_\tau) \cdot d\tau \\&\approx w_t - \int_{t}^{t+\eta} F(w_{t}) \cdot d\tau = w_t - \eta F(w_{t}).
        \end{aligned}
	\end{equation}
    Note that when $F(w) = \nabla f(w)$ forward Euler discretization update stated above corresponds to GD and is known to enjoy favorable convergence properties under the Lipschitzness of $\nabla f(w)$, i.e., the smoothness of $f$. Note, in this case, the flow is purely symmetric as the Jacobian of $\nabla f(w)$, i.e., the Hessian of $f(w)$, is a symmetric matrix. 

    For general flows where $F(w)$ is not necessarily a gradient and thus the flow is skew-symmetrical, it is very well known that forward Euler discretization is ineffective: The limiting solution does not converge to the equilibrium, and the ergodic average solution converges slowly.
    
    In our context, as we demonstrated in Proposition \ref{propositionh:jacobian},  the dynamics in \eqref{eq:approxODE} is inherently skew-symmetrical. Thus, the forward Euler discretization update of the continuous-time limit of SGM's dynamics, which readily is the original SGM in \eqref{eq:SGM}, will be an ineffective approach as well and incapable of last-iterate convergence and faster rates under smoothness. Thus, this discussion highlights that SGM may fail to attain a complexity better than $\O(\epsilon^{-2})$ due to the discontinuity of its flow and skew-symmetric nature of the flow's continuous approximations.
\section{Soft Switching Mechanism via Trimmed Hinge}\label{sec:soft}
Given the desired properties of the proposed dynamics in \eqref{eq:approxODE} discussed in Section \ref{sec:sgm-dynamics}, notably guaranteed existence of a solution and the uniqueness under certain Lipschitzness properties, a natural question is whether its immediate forward Euler discretization, i.e.
\begin{equation}\label{eq:soft-SGM}
\tag{SSGM}
    \begin{aligned}
        w_{t+1} &= w_t - \eta F(w_t)\\
        F(w)&=\sigma_\beta(g(w)-\epsilon) \nabla g(w)+ [1-\sigma_\beta(g(w)-\epsilon)]\nabla f(w),
    \end{aligned}
\end{equation}
which we refer to as Soft Switching Gradient Method (SSGM),
enjoys any convergence guarantees in the sense of finding an $\epsilon$-solution per Definition \ref{def:solution}. It turns out we can provide theoretical guarantees for SSGM if instead of the sigmoid-based approximation we adopt a ``trimmed-hinged'' approximation:
\begin{equation}
    \sigma_\beta(x)= \min\{1,[1+\beta x]_+\} = \text{Proj}_{[0,1]}(1+\beta x).
\end{equation}
In particular, we provide the following theorem.
\begin{theorem}\label{thm:sgm3}
    Consider the problem in \eqref{eq:mainproblem} and SSGM  in \eqref{eq:soft-SGM}. Assume $f$ and $g$ are convex and $G$-Lipschitz. Define $D:=\|w_1-w^\ast\|$ and 
    \begin{equation}
       \A = \{t\in [T]\;| \;g(w_t)< \epsilon\},\quad \bar{w} = \sum_{t\in \A} \alpha_t w_t,
    \end{equation}
    where 
    \begin{equation}
        \alpha_t = \frac{1-\sigma_\beta(g(w_t)-\epsilon)}{\sum_{t\in \A} [1-\sigma_\beta(g(w_t)-\epsilon)]}.
    \end{equation}
            Then, if
                \begin{equation}
        \epsilon = \frac{2DG}{\sqrt{T}},\qquad  \eta =\frac{D}{G\sqrt{T}},\quad \beta = \frac{2}{\epsilon},
    \end{equation}
            it holds that $\A$ is nonempty, $\bar{w}$ is well-defined, and $\bar{w}$ is an $\epsilon$-solution for \eqref{eq:mainproblem}.
\end{theorem}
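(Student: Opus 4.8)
The plan is to adapt the potential-function argument of Theorem \ref{thm:sgm1} to the soft weights, writing $s_t := \sigma_\beta(g(w_t)-\epsilon)\in[0,1]$ so that $F(w_t) = s_t\nabla g(w_t) + (1-s_t)\nabla f(w_t)$. First I would expand $\|w_{t+1}-w^\ast\|^2 = \|w_t - w^\ast\|^2 - 2\eta\langle F(w_t), w_t - w^\ast\rangle + \eta^2\|F(w_t)\|^2$ and note that, since $F(w_t)$ is a convex combination of two vectors of norm at most $G$, we have $\|F(w_t)\|\le G$. Next, using convexity of $f$ and $g$ together with feasibility of $w^\ast$ (so $g(w^\ast)\le 0$), I would lower bound $\langle F(w_t), w_t - w^\ast\rangle \ge s_t g(w_t) + (1-s_t)(f(w_t)-f(w^\ast))$. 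Telescoping over $t\in[T]$ and inserting $\eta = D/(G\sqrt{T})$ (so that $D^2 + T\eta^2 G^2 = 2D^2$) yields $\sum_{t=1}^T\big[s_t g(w_t) + (1-s_t)(f(w_t)-f(w^\ast))\big]\le DG\sqrt{T}$.

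The crux of the argument is a pointwise inequality enabled by the trimmed-hinge choice $\beta = 2/\epsilon$, namely $s_t\, g(w_t)\ge \tfrac{\epsilon}{2}\, s_t$ for every $t$. This is checked by cases: $s_t = 0$ when $g(w_t)\le\epsilon/2$; on the linear region $g(w_t)\in(\epsilon/2,\epsilon)$ one has $s_t = \tfrac{2}{\epsilon} g(w_t)-1$ and $g(w_t) = \tfrac{\epsilon}{2}(s_t+1)\ge\tfrac{\epsilon}{2}$; and $s_t = 1$ with $g(w_t)\ge\epsilon$ when $g(w_t)\ge\epsilon$. The second key observation is that $1-s_t = 0$ precisely when $g(w_t)\ge\epsilon$, i.e. for $t\notin\A$, so that $\sum_{t}(1-s_t)(f(w_t)-f(w^\ast)) = \sum_{t\in\A}(1-s_t)(f(w_t)-f(w^\ast))\ge S_\A\,(f(\bar w)-f(w^\ast))$ by convexity of $f$, where $S_\A := \sum_{t\in\A}(1-s_t) = \sum_{t=1}^T(1-s_t)$ is exactly the normalizer of the weights $\alpha_t$.

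Combining these, the left side of the telescoped bound is at least $\tfrac{\epsilon}{2}(T-S_\A) + S_\A(f(\bar w)-f(w^\ast))$, using $\sum_t s_t g(w_t)\ge\tfrac{\epsilon}{2}\sum_t s_t = \tfrac{\epsilon}{2}(T-S_\A)$; hence $\tfrac{\epsilon}{2}(T-S_\A) + S_\A(f(\bar w)-f(w^\ast))\le DG\sqrt{T}$. Since the parameter choice gives $\tfrac{\epsilon}{2}T = DG\sqrt{T}$, the two $DG\sqrt{T}$ terms cancel and dividing by $S_\A>0$ leaves $f(\bar w)-f(w^\ast)\le \epsilon/2\le\epsilon$. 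Feasibility is then immediate: by convexity of $g$ and $\alpha_t\ge 0$, $g(\bar w)\le\sum_{t\in\A}\alpha_t g(w_t)<\epsilon$ because every $t\in\A$ satisfies $g(w_t)<\epsilon$.

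Finally, to guarantee $\A\neq\emptyset$ (so that $\bar w$ and the $\alpha_t$ are well-defined and $S_\A>0$), I would argue by contradiction: if $\A=\emptyset$ then $g(w_t)\ge\epsilon$ and $s_t=1$ for all $t$, so the telescoped bound forces $\sum_t g(w_t)\le DG\sqrt{T}$, while $g(w_t)\ge\epsilon$ gives $\sum_t g(w_t)\ge T\epsilon = 2DG\sqrt{T}$, a contradiction. The main obstacle — and the place where the specific constants $\epsilon = 2DG/\sqrt{T}$ and $\beta = 2/\epsilon$ are doing real work — is establishing the pointwise bound $s_t g(w_t)\ge\tfrac{\epsilon}{2}s_t$ and aligning it with the normalizer $S_\A$ so that the cancellation $\tfrac{\epsilon}{2}T = DG\sqrt{T}$ goes through; everything else is a routine adaptation of the Lipschitz–convex SGM analysis.
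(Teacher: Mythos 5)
Your proof is correct and follows essentially the same route as the paper's: the standard telescoping bound $\sum_{t=1}^T\langle F(w_t),w_t-w^\ast\rangle\le DG\sqrt{T}$, the trimmed-hinge facts that $s_t=1$ off $\A$ and that $s_t>0$ forces $g(w_t)\ge\epsilon-1/\beta=\epsilon/2$, Jensen's inequality with the weights $\alpha_t$, and a contradiction argument for $\A\neq\emptyset$. Your only departure is cosmetic but clean: by phrasing the key step as the pointwise bound $s_t g(w_t)\ge\tfrac{\epsilon}{2}s_t$ and exploiting the exact cancellation $\tfrac{\epsilon}{2}T=DG\sqrt{T}$, you avoid the paper's two-case analysis on the sign of $\sum_{t\in\A}(1-s_t)(f(w_t)-f(w^\ast))$ and its division step, arriving at the same conclusion $f(\bar w)-f(w^\ast)\le\epsilon/2$.
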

The above theorem establishes that SSGM has an optimal iteration complexity of $\O(\epsilon^{-2})$. We also note that, crucially, the definition of set $\A$ is changed compared to the statements of Theorems \ref{thm:sgm1} and \ref{thm:sgm2}, and here we require strict inequality. Further, the solution is produced by finding a weighted average (and in the case of random sampling, a non-uniform sampling strategy). Note that for any $t\in \A$ we have $g(w_t)< \epsilon$ thus, by definition of $\beta(g(w_t)-\epsilon)$ in \eqref{eq:approxODE}, we have $\sigma_\beta(g(w_t)-\epsilon)<1$. Thus, a benefit of the weighted solution is that it puts more weight on iterates with smaller constraint values.

We remark that the dependence of $\beta$ on $\epsilon$ is potentially a very pessimistic estimate. The reason for that is that, as we elaborate in the proof of Theorem \ref{thm:sgm3}, we use the highly pessimistic bound $\sum_{t\in \A} \sigma_\beta(g(w_t)-\epsilon) <T$. In practice, however, $\beta$ can be treated as constant and independent of $T$. Indeed, in our experiments, we simply used $\beta=1$.

Let us conclude this section by estimating the convergence properties of SSGM when $f$ and $g$ are $L$-smooth. The proof, which is omitted for brevity, follows using ideas developed in the proofs of Theorems \ref{thm:sgm2} and \ref{thm:sgm3}.
\begin{theorem}\label{thm:sgm4}
    Consider the problem in \eqref{eq:mainproblem} and SSGM  in \eqref{eq:soft-SGM}. Assume $f$ and $g$ are convex and $L$-smooth. Define $D:=\|w_1-w^\ast\|$  and the following
    \begin{equation}
    \begin{aligned}
                \Tilde{f}&:= \min_x f(x) >-\infty,\qquad\Tilde{g}:= \min_x g(x) >-\infty,\\\Delta_{\max} &:= \max\{f(w^\ast)-\Tilde{f},g(w^\ast)-\Tilde{g}\}\geq 0,
    \end{aligned}
    \end{equation}
    and
        \begin{equation}
       \A = \{t\in [T]\;| \;g(w_t)< \epsilon\},\quad \bar{w} = \sum_{t\in \A} \alpha_t w_t,
    \end{equation}
    where 
    \begin{equation}
        \alpha_t = \frac{1-\sigma_\beta(g(w_t)-\epsilon)}{\sum_{t\in \A} [1-\sigma_\beta(g(w_t)-\epsilon)]}.
    \end{equation}
Then if 
            \begin{equation}
            \begin{aligned}
            \epsilon &= \frac{4LD^2}{T}+\sqrt{\frac{32LD^2\Delta_{\max}}{T}},\qquad \beta = \frac{2}{\epsilon},\\\eta &= \min\Big\{\frac{1}{2L}, \sqrt{\frac{D^2}{2L\Delta_{\max}T}}\big\},
            \end{aligned}
        \end{equation} 
            it holds that $\A$ is nonempty, $\bar{w}$ is well-defined, and $\bar{w}$ is an $\epsilon$-solution for \eqref{eq:mainproblem}.
\end{theorem}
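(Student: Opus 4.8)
The plan is to graft the smoothness-based descent estimate behind Theorem~\ref{thm:sgm2} onto the soft convex-combination weighting of Theorem~\ref{thm:sgm3}. Writing $s_t := \sigma_\beta(g(w_t)-\epsilon)\in[0,1]$ so that $F(w_t)=s_t\nabla g(w_t)+(1-s_t)\nabla f(w_t)$, I would expand the exact distance recursion
\[
\|w_{t+1}-w^\ast\|^2 = \|w_t-w^\ast\|^2 - 2\eta\langle F(w_t),w_t-w^\ast\rangle + \eta^2\|F(w_t)\|^2.
\]
Convexity of $f$ and $g$ lower-bounds the cross term by $s_t(g(w_t)-g(w^\ast))+(1-s_t)(f(w_t)-f(w^\ast))$, while Jensen applied to $\|\cdot\|^2$ (legitimate since $s_t\in[0,1]$) together with the smoothness inequalities $\|\nabla f(w)\|^2\le 2L(f(w)-\tilde f)$ and $\|\nabla g(w)\|^2\le 2L(g(w)-\tilde g)$ upper-bounds $\|F(w_t)\|^2$ by $2L[s_t(g(w_t)-\tilde g)+(1-s_t)(f(w_t)-\tilde f)]$. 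Splitting $f(w_t)-\tilde f=(f(w_t)-f(w^\ast))+(f(w^\ast)-\tilde f)$ (and analogously for $g$), using $\eta\le\tfrac1{2L}$, and observing that $s_t+(1-s_t)=1$ collapses the two offset terms into a single $2L\eta^2\Delta_{\max}$, yields the per-step bound
\[
\|w_{t+1}-w^\ast\|^2 \le \|w_t-w^\ast\|^2 - 2\eta(1-L\eta)(1-s_t)(f(w_t)-f(w^\ast)) - \eta s_t\tfrac{\epsilon}{2} + 2L\eta^2\Delta_{\max}.
\]
Here the trimmed-hinge geometry is essential: with $\beta=2/\epsilon$ the event $s_t>0$ forces $g(w_t)>\epsilon-1/\beta=\epsilon/2$, so the constraint contribution $-2\eta(1-L\eta)s_t(g(w_t)-g(w^\ast))$ is at most $-\eta s_t\tfrac{\epsilon}{2}$ for every $t$ (trivially when $s_t=0$). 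Crucially I would \emph{not} replace the factor $2\eta(1-L\eta)$ by $\eta$ on the objective term at this stage, since $f(w_t)-f(w^\ast)$ need not be nonnegative along the trajectory.

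Summing this bound over $t\in[T]$, dropping $\|w_{T+1}-w^\ast\|^2\ge 0$, and writing $S:=\sum_{t\in\A}(1-s_t)=\sum_{t}(1-s_t)$ (so that $\sum_t s_t=T-S$) gives
\[
2\eta(1-L\eta)\sum_{t\in\A}(1-s_t)(f(w_t)-f(w^\ast)) + \eta\tfrac{\epsilon}{2}(T-S) \le D^2 + 2L\eta^2\Delta_{\max}T.
\]
Jensen's inequality with weights $\alpha_t=(1-s_t)/S$ then lower-bounds the first sum by $S(f(\bar w)-f(w^\ast))$ -- valid irrespective of the signs of the summands -- and feasibility $g(\bar w)<\epsilon$ follows at once from convexity of $g$, since $g(w_t)<\epsilon$ for every $t\in\A$.

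Nonemptiness of $\A$ and the optimality gap then both reduce to one scalar inequality. If $\A=\emptyset$ then $S=0$ and the bound forces $\eta\tfrac{\epsilon}{2}T\le D^2+2L\eta^2\Delta_{\max}T$; if instead $f(\bar w)-f(w^\ast)>\epsilon$, I would invoke $2\eta(1-L\eta)\ge\eta$ (now legitimate because the gap is assumed positive) to obtain $\eta\tfrac{\epsilon}{2}(S+T)\le D^2+2L\eta^2\Delta_{\max}T$, hence again $\eta\tfrac{\epsilon}{2}T\le D^2+2L\eta^2\Delta_{\max}T$. It therefore suffices to check that the prescribed $\epsilon=\tfrac{4LD^2}{T}+\sqrt{\tfrac{32LD^2\Delta_{\max}}{T}}$ and $\eta=\min\{\tfrac1{2L},\sqrt{\tfrac{D^2}{2L\Delta_{\max}T}}\}$ make $\eta\tfrac{\epsilon}{2}T>D^2+2L\eta^2\Delta_{\max}T$, a short two-case computation over the $\min$ defining $\eta$ (the two branches being $\Delta_{\max}T\le 2LD^2$ and $\Delta_{\max}T> 2LD^2$). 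I expect the main obstacle to be routing the smoothness-induced term $\|\nabla g(w_t)\|^2$ through the constraint part of the split while preserving exactly an $\epsilon/2$ worth of progress per partially-active step: it is precisely this halving, inherited from the width $1/\beta=\epsilon/2$ of the hinge's linear region, that forces the threshold $\epsilon$ here to be exactly twice that of Theorem~\ref{thm:sgm2} and makes the final calibration close.
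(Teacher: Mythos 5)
Your proposal is correct and is precisely the argument the paper has in mind: the paper omits this proof, saying only that it ``follows using ideas developed in the proofs of Theorems \ref{thm:sgm2} and \ref{thm:sgm3},'' and your derivation---Jensen plus Lemma \ref{lemma:smoothness} to bound $\|F(w_t)\|^2$ by $2L[s_t(g(w_t)-\tilde g)+(1-s_t)(f(w_t)-\tilde f)]$, the hinge width $1/\beta=\epsilon/2$ converting every step with $s_t>0$ into at least $\eta s_t \epsilon/2$ of progress, the sign-aware retention of the $2\eta(1-L\eta)$ factor, and the final two-case calibration---is exactly that combination, closing at the stated $\epsilon$, $\eta$, $\beta$ (indeed $\epsilon$ is exactly twice the Theorem \ref{thm:sgm2} threshold, as you observe). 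The only point to patch is the degenerate case $\Delta_{\max}=0$, where your target strict inequality $\eta\epsilon T/2 > D^2+2L\eta^2\Delta_{\max}T$ degrades to an equality; this is harmless because the two bad-case bounds are themselves strict (when $s_t>0$ one has $g(w_t)-g(w^\ast)>\epsilon/2$ strictly, and in the gap case $S>0$ with $f(\bar w)-f(w^\ast)>\epsilon$ strictly), so a non-strict calibration already yields the contradiction.
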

\section{Optimistic Discretization}\label{sec:OSGM}
As we discussed in Sections \ref{sec:contribution} and \ref{sec:sgm-dynamics}, any differentiable approximation of the flow of SGM will be skew-symmetrical in nature. For such flows, it is very well known that forward Euler discretization is ineffective. A simple discretization that is proven effective, however, is the backward Euler method \cite{butcher2016numerical}
        \begin{equation}
        \begin{aligned}
            	    w_{t+1} &= w_t - \int_{t}^{t+1} F(w_\tau) \cdot d\tau \\&\approx w_t - \int_{t}^{t+\eta} F(w_{t+1}) \cdot d\tau = w_t - \eta F(w_{t+1}).
        \end{aligned}
	\end{equation}

We refer to this approach as optimistic discretization as it optimistically uses the direction specified by the predicted point to form the update.

It is then natural to explore the efficacy of this approach for the discretization of the SGM's flow as well as \eqref{eq:approxODE} and studying the convergence of the resultant methods.
In doing so, we first explore the following simple method
\begin{equation}\label{eq:SSPPM}
\tag{SSPPM}
    \begin{aligned}
        w_{t+1} &= w_t - \eta F(w_{t+1})\\
       F(w)&=\sigma_\beta(g(w)-\epsilon) \nabla g(w)+ [1-\sigma_\beta(g(w)-\epsilon)]\nabla f(w),
    \end{aligned}
\end{equation}
which we refer to it as Soft Switching Proximal Point Method (SSPPM). The reason we choose this name is the well-known connection of the celebrated proximal point method \cite{rockafellar1976monotone}
\begin{equation}\label{PPM}
    w_{t+1} = \arg\min_w \;\;f(w)+\frac{1}{2\eta}\|w-w_t\|^2,
\end{equation}
and backward Euler discretization when the flow is described by the gradient of a convex function $w_{t+1} = w_t - \eta \nabla f(w_{t+1})$. 

We state the following result for SSPPM.
\begin{theorem}\label{thm:sgm5}
    Consider the problem in \eqref{eq:mainproblem} and SSPPM  in \eqref{eq:SSPPM}. Assume $f$ and $g$ are convex. Define $D:=\|w_1-w^\ast\|$ and 
    \begin{equation}
       \A = \{t\in [T]\;| \;g(w_t)< \epsilon\},\quad \bar{w} = \sum_{t\in \A} \alpha_t w_t,
    \end{equation}
    where 
    \begin{equation}
        \alpha_t = \frac{1-\sigma_\beta(g(w_t)-\epsilon)}{\sum_{t\in \A} [1-\sigma_\beta(g(w_t)-\epsilon)]}.
    \end{equation}
            Suppose the nonlinear equation in \eqref{eq:SSPPM} has a solution for all $t$. Then if
                \begin{equation}
        \epsilon = \frac{D^2}{\eta T},\qquad \eta>0,\qquad  \beta = \frac{2}{\epsilon},
    \end{equation}
            it holds that $\A$ is nonempty, $\bar{w}$ is well-defined, and $\bar{w}$ is an $\epsilon$-solution for \eqref{eq:mainproblem}.
\end{theorem}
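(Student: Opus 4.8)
The plan is to treat \eqref{eq:SSPPM} as a backward Euler (proximal-point-type) discretization and exploit the fact that implicit updates admit a telescoping estimate which, unlike forward Euler, never requires bounding $\|F\|$; this is exactly what lets the theorem drop the smoothness/Lipschitz hypotheses. Writing $\sigma_t := \sigma_\beta(g(w_t)-\epsilon)$ so that $F(w_t)=\sigma_t\nabla g(w_t)+(1-\sigma_t)\nabla f(w_t)$, the update gives $F(w_{t+1})=(w_t-w_{t+1})/\eta$. First I would apply the three-point identity
\begin{equation}
2\eta\langle F(w_{t+1}),w_{t+1}-w^\ast\rangle=\|w_t-w^\ast\|^2-\|w_{t+1}-w^\ast\|^2-\|w_t-w_{t+1}\|^2,
\end{equation}
discard the nonpositive term $-\|w_t-w_{t+1}\|^2$, telescope, and use $\|w_1-w^\ast\|^2=D^2$ to obtain $\sum_t \langle F(w_{t+1}),w_{t+1}-w^\ast\rangle\le D^2/(2\eta)=\epsilon T/2$, the last equality using $\epsilon=D^2/(\eta T)$. (I will fix the index convention so the evaluated iterates $w_{t+1}$ are relabelled to the $w_t$, $t\in[T]$, appearing in $\A$.)

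Next I would lower bound each summand by convexity. Since $g(w^\ast)\le 0$,
\begin{equation}
\langle F(w_t),w_t-w^\ast\rangle\ge \sigma_t\, g(w_t)+(1-\sigma_t)\big(f(w_t)-f(w^\ast)\big).
\end{equation}
The heart of the argument is then to exploit the trimmed hinge with $\beta=2/\epsilon$. For $t\notin\A$ (i.e. $g(w_t)\ge\epsilon$) one has $\sigma_t=1$, so the summand equals $g(w_t)\ge\epsilon$; for $t\in\A$ one has $\sigma_t<1$, and moreover $\sigma_t>0$ forces $1+\beta(g(w_t)-\epsilon)>0$, i.e. $g(w_t)>\epsilon-1/\beta=\epsilon/2>0$, whence $\sigma_t g(w_t)\ge\tfrac{\epsilon}{2}\sigma_t\ge 0$. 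These two facts — $1-\sigma_t=0$ off $\A$ and $\sigma_t g(w_t)\ge\tfrac{\epsilon}{2}\sigma_t$ on $\A$ — are precisely what the choice $\beta=2/\epsilon$ buys.

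The feasibility claim is immediate and does not use the telescoping: by convexity of $g$, and since $\bar w$ is a convex combination of the $\{w_t:t\in\A\}$, each with $g(w_t)<\epsilon$, we get $g(\bar w)\le\sum_{t\in\A}\alpha_t g(w_t)<\epsilon$. Nonemptiness of $\A$ (hence well-definedness of $\bar w$, since $t\in\A\Rightarrow\sigma_t<1\Rightarrow 1-\sigma_t>0$, so the normalizer $W:=\sum_{t\in\A}(1-\sigma_t)$ is strictly positive) I would obtain by contradiction: if $\A=\emptyset$ then $\sigma_t=1$ for all $t$, the telescoped bound reads $\sum_t g(w_t)\le\epsilon T/2$, yet each $g(w_t)\ge\epsilon$ gives $\epsilon T\le\epsilon T/2$, impossible.

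For optimality, convexity of $f$ gives $f(\bar w)-f(w^\ast)\le S_1/W$ with $S_1:=\sum_{t\in\A}(1-\sigma_t)(f(w_t)-f(w^\ast))$, so it suffices to prove $S_1\le\epsilon W$. Splitting the telescoped sum as $S_1+S_2+S_3\le\epsilon T/2$ with $S_2:=\sum_{t\in\A}\sigma_t g(w_t)$ and $S_3:=\sum_{t\notin\A}g(w_t)$ gives $S_1\le\epsilon T/2-S_2-S_3$; then using the lower bounds $S_2\ge\tfrac{\epsilon}{2}\sum_{t\in\A}\sigma_t$ and $S_3\ge\epsilon(T-|\A|)$ to upper-bound the right-hand side, the desired $S_1\le\epsilon W=\epsilon\big(|\A|-\sum_{t\in\A}\sigma_t\big)$ reduces after cancellation to the single requirement $\sum_{t\in\A}\sigma_t\le T$, which holds trivially since each $\sigma_t\le 1$ and $|\A|\le T$. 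The main obstacle is really this final bookkeeping: ensuring that the weights $\alpha_t\propto(1-\sigma_t)$, the hinge slope $\beta=2/\epsilon$, and the relation $\epsilon=D^2/(\eta T)$ conspire so that all cross terms cancel exactly; the closing estimate $\sum_{t\in\A}\sigma_t\le|\A|\le T$ is the deliberately loose bound flagged in the remark after Theorem \ref{thm:sgm3}, and it is what pins $\beta$ to $\epsilon$. Combining, $S_1\le\epsilon W$ and hence $f(\bar w)-f(w^\ast)\le\epsilon$, completing the proof.
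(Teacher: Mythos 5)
Your proposal is correct and follows essentially the same route as the paper: the three-point (polarization) identity for the implicit update, telescoping after dropping the negative terms, and then exactly the Theorem \ref{thm:sgm3} machinery — the $\A$/$\B$ split, $\sigma_\beta=1$ off $\A$, the bound $\sigma_\beta(g(w_t)-\epsilon)\,g(w_t)\ge(\epsilon-1/\beta)\sigma_\beta(g(w_t)-\epsilon)$ on $\A$, and the choice $\beta=2/\epsilon$ pinned down by $\sum_{t\in\A}\sigma_\beta(g(w_t)-\epsilon)\le T$. The only (harmless) cosmetic differences are that you prove $S_1\le\epsilon W$ directly rather than splitting on the sign of $S_1$ as the paper does, and that you carry out explicitly the steps the paper delegates to ``following identical steps'' in the proof of Theorem \ref{thm:sgm3}.
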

Note that the theorem does not require Lipschitzness or smoothness of $f$ and $g$. Yet, it suggests that SSPPM achieves the complexity of $\O(\epsilon^{-1})$ as long as the nonlinear equation in \eqref{eq:SSPPM} has a solution for all $t$. We will discuss this point in the remainder of this section. Another remark worth stating is the implicit nature of the update \eqref{eq:SSPPM}. We will discuss this aspect in the remainder of this section as well. Note that both of these considerations are generally applicable to the the original proximal point method for solving variational inequalities.
\subsection{Fixed Point Existence and  Calculation}
As stated, Theorem \ref{thm:sgm5} requires the existence of a solution for $ w= w_t - \eta F(w)$ for all $t$. At its core, this task is a fixed point problem. 

There exist many fixed-point theorems that could be leveraged to establish conditions under which a solution for \eqref{eq:SSPPM} exists. Notably, by the Krasnosel'ski\u{\i}–Mann Theorem \cite{krasnosel1955two,mann1953mean}, a sufficient condition is non-expansiveness of $\eta F(\cdot)$. Thus, as long as $\eta G_F\leq 1$ \eqref{eq:SSPPM} will have a solution where $G_F$ is the Lipschitz constant of $F(\cdot)$. Uniqueness is guaranteed if $\eta G_F< 1$ according to the Banach fixed-point theorem \cite{banach1922operations}. The next proposition establishes a bound on $G_F$ based on the properties $f$, $g$, and $\sigma_\beta(\cdot)$.
\begin{proposition}\label{prop:lip}
    Assume $f$ and $g$ are $G$-Lipschitz and $L$-smooth. Then, $G_F \leq 2(L+G^2 \beta)$.
\end{proposition}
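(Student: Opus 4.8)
The plan is to bound $\norm{F(w)-F(w')}$ directly via a product-rule style decomposition, exploiting that the switching weight $s(w):=\sigma_\beta(g(w)-\epsilon)$ always lies in $[0,1]$. First I would write $F$ as the convex combination $F(w)=s(w)\nabla g(w)+(1-s(w))\nabla f(w)$ and split each of the two terms into a \emph{gradient part} and a \emph{weight part}:
\begin{align*}
F(w)-F(w') &= s(w)\big(\nabla g(w)-\nabla g(w')\big)+\big(s(w)-s(w')\big)\nabla g(w')\\
&\quad+(1-s(w))\big(\nabla f(w)-\nabla f(w')\big)+\big(s(w')-s(w)\big)\nabla f(w'),
\end{align*}
where I have used $(1-s(w))-(1-s(w'))=s(w')-s(w)$. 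Each of the four terms can then be attacked independently by the triangle inequality.

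Next I would bound the two gradient-part terms using $L$-smoothness together with the fact that $s(w),\,1-s(w)\in[0,1]$: since $\abs{s(w)}\le 1$ and $\abs{1-s(w)}\le1$, we get $\norm{s(w)(\nabla g(w)-\nabla g(w'))}\le L\norm{w-w'}$ and likewise $\norm{(1-s(w))(\nabla f(w)-\nabla f(w'))}\le L\norm{w-w'}$, contributing $2L\norm{w-w'}$ in total. For the two weight-part terms I would invoke $G$-Lipschitzness in its gradient-bound form, namely $\norm{\nabla f(w')}\le G$ and $\norm{\nabla g(w')}\le G$, so that each weight-part term is at most $G\,\abs{s(w)-s(w')}$.

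The remaining, and only mildly delicate, ingredient is to control $\abs{s(w)-s(w')}$. Here I would use that $\sigma_\beta$ is $\beta$-Lipschitz in its argument — the trimmed-hinge map $\text{Proj}_{[0,1]}(1+\beta x)$ is the composition of the non-expansive projection onto $[0,1]$ with an affine function of slope $\beta$ (and the sigmoid is even $\tfrac{\beta}{4}$-Lipschitz, so either choice satisfies the bound). Composing once more with the $G$-Lipschitz map $w\mapsto g(w)-\epsilon$ yields $\abs{s(w)-s(w')}\le \beta G\norm{w-w'}$, whence each weight-part term is at most $G^2\beta\norm{w-w'}$.

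Summing the four contributions gives
\begin{equation*}
\norm{F(w)-F(w')}\le \big(2L+2G^2\beta\big)\norm{w-w'},
\end{equation*}
i.e.\ $G_F\le 2(L+G^2\beta)$, as claimed. I expect the main obstacle to be purely bookkeeping: keeping the product-rule split clean and not conflating the $L$-contribution (from the gradient differences, weighted by quantities in $[0,1]$) with the $G^2\beta$-contribution (from the weight differences, multiplied by the bounded gradients). No deeper structural difficulty arises, and in particular convexity of $f$ and $g$ is never used — only the boundedness of the switching weights, the smoothness of the gradients, and the Lipschitzness of $g$ and of $\sigma_\beta$.
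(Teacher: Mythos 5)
Your proof is correct and follows essentially the same route as the paper's: a product-rule decomposition of $F(w)-F(w')$, smoothness together with $\sigma_\beta\in[0,1]$ for the gradient-difference terms, and the non-expansiveness of the projection (i.e., $\beta$-Lipschitzness of the trimmed hinge) composed with the $G$-Lipschitzness of $g$ for the weight-difference terms, with gradients bounded by $G$. The only cosmetic difference is that the paper groups the weight difference against $\nabla g(y)-\nabla f(y)$ in a three-term split (which in fact yields the slightly tighter constant $L+2G^2\beta$), whereas your symmetric four-term split gives exactly $2L+2G^2\beta$; both are within the stated bound $2(L+G^2\beta)$.
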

Before utilizing the result of this proposition, let us remark that due to the discontinuity of $\mathbb{I}(\cdot)$ (and as a consequence the lack of Lipschitzness), the hard switching counterpart of SSPPM in \eqref{eq:SSPPM} is not guaranteed to have any solution.

Let us now use the result of Proposition \ref{prop:lip} along with the result of Theorem \ref{thm:sgm5}. Setting $\eta = 1/2(L+G^2 \beta)$ and equating $\epsilon =2/\beta$ and $\epsilon = D^2/\eta T$ leads to a quadratic equation for $\beta$. Solving this equation determines $\epsilon = DG/\sqrt{T}$ and $\beta = 2/\epsilon$.
This result implies that in the worst case and to guarantee a solution exists for all $t$, the complexity of SSPPM is $\O(\epsilon^{-2})$. Interestingly, while smoothness is leveraged in Proposition \ref{prop:lip}, the expression for the best $\epsilon$ and $\beta$ stated above are independent of $L$. Let us also state that we believe this bound is highly pessimistic due to the fact that we used a potentially pessimistic bound to establish $\beta = 2/\epsilon$ (please refer to the discussion following Theorem \ref{thm:sgm3}). Thus, treating $\beta$ as a constant which is independent of $T$ would still imply $\O(\epsilon^{-1})$  complexity.

Now, suppose a solution exists. The next concern is the implicit nature of the SSPPM update. As finding $w_{t+1}$ amounts to a fixed point calculation problem, many algorithms such as the classical fixed point iteration, Krasnosel'ski\u{\i}–Mann iteration, or their approximate and stochastic version can be leveraged \cite{liang2016convergence,hashemi2024unified,bravo2024stochastic}. If $\eta$ is set such that $\eta F(\cdot)$ is contractive, exact methods can generally converge linearly while stochastic ones may suffer from sublinear rates \cite{combettes2015stochastic}. While generally leveraging fixed point iteration methods is a promising approach, we pursue an alternative solution next.
\subsection{Explicit Optimistic Updates}
Here we explore a simple alternative by leveraging the connection between \eqref{PPM} and backward Euler discretization. As mentioned earlier, in our context, {$F(w)$} is not a gradient. However, we propose to simply replace the terms involving $\sigma_\beta(g(w_{t+1})-\epsilon)$ with $\sigma_\beta(g(w_t)-\epsilon)$ leading to Soft Switching Proximal Point Method-Explicit (SSPPM-E)
\begin{equation}\label{eq:SSPPM-E}
\tag{SSPPM-E}
    \begin{aligned}
        w_{t+1} &= \arg\min_w \;\; h_t(w)+\frac{1}{2\eta}\|w-w_t\|^2\\
       h_t(w)&= \sigma_\beta(g(w_t)-\epsilon) g(w)+ [1-\sigma_\beta(g(w_t)-\epsilon)]f(w).
    \end{aligned}
\end{equation}
In scenarios where $f$ and $g$ have closed-from proximal maps, e.g., $\ell_1$, $\ell_2$, affine, and quadratic functions, $h_t(w)$ will also have a closed-form proximal map. 

Next, we show that SSPPM-E enjoys the following optimal $\O(\epsilon^{-2})$ complexity guarantee.
\begin{theorem}\label{thm:sgm6}
    Consider the problem in \eqref{eq:mainproblem} and SSPPM-E  in \eqref{eq:SSPPM-E}. Assume $f$ and $g$ are convex and $G$-Lipschitz. Define $D:=\|w_1-w^\ast\|$ and 
    \begin{equation}
       \A = \{t\in [T]\;| \;g(w_t)< \epsilon\},\quad \bar{w} = \sum_{t\in \A} \alpha_t w_t,
    \end{equation}
    where 
    \begin{equation}
        \alpha_t = \frac{1-\sigma_\beta(g(w_t)-\epsilon)}{\sum_{t\in \A} [1-\sigma_\beta(g(w_t)-\epsilon)]}.
    \end{equation}
Then if
                \begin{equation}
        \epsilon = \frac{2\sqrt{2}DG}{\sqrt{T}},\qquad \eta = \frac{D}{G\sqrt{2T}},\qquad  \beta = \frac{2}{\epsilon},
    \end{equation}
            it holds that $\A$ is nonempty, $\bar{w}$ is well-defined, and $\bar{w}$ is an $\epsilon$-solution for \eqref{eq:mainproblem}.
\end{theorem}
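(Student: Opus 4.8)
The plan is to follow the template of Theorem \ref{thm:sgm3} but to replace the explicit descent step by the implicit (proximal) one, and then to bridge the resulting one-step index shift using $G$-Lipschitzness. Writing $\rho_t := \sigma_\beta(g(w_t)-\epsilon)$ and $h_t(w) = \rho_t g(w) + (1-\rho_t)f(w)$, the first-order optimality condition of the proximal subproblem gives $w_{t+1} = w_t - \eta\nabla h_t(w_{t+1})$. Since the prox objective is $\tfrac1\eta$-strongly convex with minimizer $w_{t+1}$, comparing its value at $w_{t+1}$ and at $w^\ast$ yields $h_t(w_{t+1}) - h_t(w^\ast) \le \tfrac1{2\eta}(\|w_t-w^\ast\|^2 - \|w_{t+1}-w^\ast\|^2) - \tfrac1{2\eta}\|w_{t+1}-w_t\|^2$. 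Using $g(w^\ast)\le 0$ and $\rho_t\ge 0$ to discard $-\rho_t g(w^\ast)$, summing over $t\in[T]$, and telescoping (dropping the nonpositive quadratic term) produces the master bound $\sum_{t=1}^T\{\rho_t g(w_{t+1}) + (1-\rho_t)[f(w_{t+1})-f(w^\ast)]\} \le \tfrac{D^2}{2\eta}$.

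The first obstacle is that this bound lives at the successor iterates $w_{t+1}$, whereas the feasibility of $\bar w$ and the weights $\alpha_t$ are anchored at $w_t$; this gap is absent in Theorem \ref{thm:sgm3}. I would close it with a Lipschitz drift estimate. Because $\nabla h_t$ is a convex combination of gradients of norm at most $G$, we have $\|w_{t+1}-w_t\| = \eta\|\nabla h_t(w_{t+1})\| \le \eta G$, hence $|g(w_{t+1})-g(w_t)| \le \eta G^2$ and $|f(w_{t+1})-f(w_t)| \le \eta G^2$; the prescribed constants make $\eta G^2 = \epsilon/4$ exactly. I then exploit the trimmed hinge with $\beta = 2/\epsilon$: the condition $\rho_t>0$ forces $g(w_t) > \epsilon/2$, so the drift estimate gives $g(w_{t+1}) > \epsilon/4$ whenever $\rho_t>0$, whence $\sum_t \rho_t g(w_{t+1}) \ge \tfrac\epsilon4\sum_t\rho_t$. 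The same drift transfers the objective back via $f(w_t)-f(w^\ast) \le f(w_{t+1})-f(w^\ast) + \eta G^2$.

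Combining these with the master bound and writing $S_\A := \sum_{t=1}^T(1-\rho_t) = T - \sum_t\rho_t$ (since $1-\rho_t=0$ off $\A$) gives $\sum_{t\in\A}(1-\rho_t)[f(w_t)-f(w^\ast)] \le \tfrac{D^2}{2\eta} + \tfrac\epsilon4 T - \tfrac\epsilon2\sum_t\rho_t$. Dividing by $S_\A$, applying convexity of $f$ to pass to $\bar w$, and invoking the deliberately pessimistic bound $\sum_t\rho_t \le T$ reduces the claim $f(\bar w)-f(w^\ast)\le\epsilon$ to the arithmetic inequality $\tfrac{D^2}{2\eta} \le \tfrac\epsilon4 T$, which $\epsilon = 2\sqrt2\,DG/\sqrt T$ and $\eta = D/(G\sqrt{2T})$ satisfy with equality. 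Feasibility is immediate: by convexity of $g$ and $g(w_t)<\epsilon$ for $t\in\A$, $g(\bar w) \le \sum_{t\in\A}\alpha_t g(w_t) < \epsilon$.

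Finally I verify well-definedness. For $t\in\A$ we have $g(w_t)<\epsilon$, so $\rho_t<1$ and $1-\rho_t>0$, giving $S_\A>0$ once $\A\ne\emptyset$. Nonemptiness I argue by contradiction: if $g(w_t)\ge\epsilon$ for all $t$, then $\rho_t\equiv 1$, the master bound collapses to $\sum_t g(w_{t+1}) \le D^2/(2\eta) = \epsilon T/4$, yet the drift estimate forces $g(w_{t+1}) \ge g(w_t)-\epsilon/4 \ge 3\epsilon/4$, so the left side is at least $3\epsilon T/4$, a contradiction. The main difficulty of the whole argument is the second paragraph: recognizing that the implicit step controls function values only at $w_{t+1}$, and that $G$-Lipschitzness, calibrated precisely through $\eta G^2 = \epsilon/4$, is exactly what reconciles this with the $w_t$-based averaging and feasibility while leaving enough slack for the constants to close.
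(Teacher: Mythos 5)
Your proposal is correct and takes essentially the same route as the paper's proof: the implicit (proximal) optimality condition plus telescoping gives the $\tfrac{D^2}{2\eta}$ bound, $G$-Lipschitzness supplies the $\eta G^2$-per-step control of the one-step index shift, and the trimmed hinge with $\beta = 2/\epsilon$ together with the pessimistic bound $\sum_t \sigma_\beta(g(w_t)-\epsilon) \le T$ closes the arithmetic. The only differences are organizational: the paper shifts $h_t(w_{t+1})$ back to $h_t(w_t)$ wholesale (its term (I)) and then invokes the proof of Theorem \ref{thm:sgm3} verbatim, whereas you keep the master bound at the successor iterates, transfer $f$ and $g$ separately via the drift estimate, and replace the sign-based case split with direct arithmetic --- a harmless reorganization that yields the same constants.
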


Finally, we demonstrate that the connection between \eqref{PPM} and backward Euler discretization can be further leveraged to design a method using the original hard switching mechanism leveraged by SGM. We call the resultant scheme Switching Proximal Point Method (SPPM) with the following update:
\begin{equation}\label{eq:SPPM}
\tag{SPPM}
\begin{aligned}
        w_{t+1} &= \arg\min_{w} \; h_t(w)+\frac{1}{2\eta}\|w-w_t\|^2\\h_t(w) &= \begin{cases}
         f(w),\quad g(w_t)\leq \epsilon\\ g(w),\quad g(w_t)> \epsilon.
    \end{cases}
\end{aligned}
\end{equation}

Theorem \ref{thm:sgm7} below, whose proof is omitted for brevity as it relies simply on the ideas developed in the proofs of Theorems \ref{thm:sgm1} and \ref{thm:sgm6}, shows SPPM also enjoys an optimal $\O(\epsilon^{-2})$ complexity guarantee.
\begin{theorem}\label{thm:sgm7}
    Consider the problem in \eqref{eq:mainproblem} and SPPM  in \eqref{eq:SPPM}. Assume $f$ and $g$ are convex and $G$-Lipschitz. Define $D:=\|w_1-w^\ast\|$ and 
    \begin{equation}
       \A = \{t\in [T]\;| \;g(w_t)\leq \epsilon\},\qquad \bar{w} = \frac{1}{|\A|} \sum_{t\in \A} w_t.
    \end{equation}
Then if
                \begin{equation}
        \epsilon = \frac{\sqrt{2}DG}{\sqrt{T}},\qquad \eta = \frac{D}{G\sqrt{2T}},
    \end{equation}
            it holds that $\A$ is nonempty, $\bar{w}$ is well-defined, and $\bar{w}$ is an $\epsilon$-solution for \eqref{eq:mainproblem}.
\end{theorem}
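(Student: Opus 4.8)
The plan is to mirror the proof of Theorem~\ref{thm:sgm6} for the per-step proximal descent and the proof of Theorem~\ref{thm:sgm1} for the switching bookkeeping and averaging, with the one genuinely new ingredient being a Lipschitz ``bridge'' that reconciles the switching test (which reads $g(w_t)$) with the information the proximal step naturally produces at $w_{t+1}$. First I would note that each SPPM update is well-posed: since $h_t\in\{f,g\}$ is convex and the quadratic term is $1/\eta$-strongly convex, the objective in \eqref{eq:SPPM} is strongly convex and has a unique minimizer, so $w_{t+1}$ is always well-defined. The optimality condition gives $w_{t+1}=w_t-\eta v_{t+1}$ with $v_{t+1}\in\partial h_t(w_{t+1})$, and $G$-Lipschitzness of $h_t$ forces $\|v_{t+1}\|\le G$, hence $\|w_{t+1}-w_t\|\le \eta G$. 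Convexity of $h_t$ at $w_{t+1}$ then yields the standard three-point (backward-Euler) inequality, for any $u$,
\[
 h_t(w_{t+1}) - h_t(u) \le \tfrac{1}{2\eta}\big(\|w_t-u\|^2 - \|w_{t+1}-u\|^2 - \|w_t-w_{t+1}\|^2\big),
\]
which I would instantiate at $u=w^\ast$.

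Next I would split on the switch, writing $\B:=[T]\setminus\A$. For $t\in\A$ (so $h_t=f$) the three-point inequality bounds $f(w_{t+1})-f(w^\ast)$; to recover the quantity $f(w_t)$ that actually enters $\bar w$, I would use $f(w_t)\le f(w_{t+1})+G\|w_t-w_{t+1}\|\le f(w_{t+1})+\eta G^2$, obtaining $2\eta\big(f(w_t)-f(w^\ast)\big)\le \|w_t-w^\ast\|^2-\|w_{t+1}-w^\ast\|^2+2\eta^2G^2$. For $t\in\B$ (so $h_t=g$ and $g(w_t)>\epsilon$) I would use $g(w^\ast)\le 0$ together with the reverse bridge $g(w_{t+1})\ge g(w_t)-\eta G^2>\epsilon-\eta G^2$ to get $2\eta\epsilon-2\eta^2G^2<\|w_t-w^\ast\|^2-\|w_{t+1}-w^\ast\|^2$. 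This Lipschitz bridge, converting between values at $w_t$ and $w_{t+1}$, is the crux of the argument and is precisely what the hard switch forces upon us (the soft schemes sidestep it by weighting with $\sigma_\beta(g(w_t)-\epsilon)$). I expect it to be the main obstacle, both in getting the direction of each inequality right and in carrying the resulting $2\eta^2G^2$ constants so that they align with the stated $\epsilon,\eta$.

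Finally I would telescope. Summing both families over $t\in[T]$ collapses the right-hand sides to at most $D^2$, giving $2\eta\sum_{t\in\A}\big(f(w_t)-f(w^\ast)\big)+2\eta\epsilon|\B|\le D^2+2\eta^2G^2T$. Nonemptiness of $\A$ then follows by contradiction: if $\A=\emptyset$ then $|\B|=T$, and the stated choices $\epsilon=\sqrt2\,DG/\sqrt T$, $\eta=D/(G\sqrt{2T})$ make the left-hand side equal exactly $D^2$, contradicting the strict $\B$-inequality. With $|\A|\ge 1$ established, I would drop the nonnegative $2\eta\epsilon|\B|$ term and divide by $2\eta|\A|$; substituting $|\A|=T-|\B|$, one checks that the $|\B|$ contributions cancel, so that the target bound $f(\bar w)-f(w^\ast)\le\frac{1}{|\A|}\sum_{t\in\A}\big(f(w_t)-f(w^\ast)\big)\le\epsilon$ reduces to the $|\B|$-free condition $\frac{D^2+2\eta^2G^2T}{2\eta}\le\epsilon T$, which the stated constants satisfy with equality. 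Feasibility is immediate and requires no extra work: by convexity of $g$ and the definition of $\A$, $g(\bar w)\le\frac{1}{|\A|}\sum_{t\in\A}g(w_t)\le\epsilon$. Combining the two bounds shows $\bar w$ is an $\epsilon$-solution, completing the argument.
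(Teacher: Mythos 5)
Your proposal is correct and takes essentially the route the paper intends (its proof of Theorem \ref{thm:sgm7} is omitted precisely because it combines Theorems \ref{thm:sgm1} and \ref{thm:sgm6}): your Lipschitz ``bridge'' $|h_t(w_t)-h_t(w_{t+1})|\le \eta G^2$ is exactly the term (I) bound in the paper's proof of Theorem \ref{thm:sgm6}, the three-point inequality is its term (II), and the telescoping, nonemptiness-by-contradiction, and averaging steps mirror Theorem \ref{thm:sgm1}, with the stated constants satisfying $\frac{D^2+2\eta^2G^2T}{2\eta}=\epsilon T$ exactly. One caution on wording: you must not literally \emph{drop} the nonnegative $2\eta\epsilon|\B|$ term before dividing by $2\eta|\A|$ (that would only yield $f(\bar w)-f(w^\ast)\le \epsilon T/|\A|$, which is insufficient when $|\A|<T$); rather, as your own cancellation step indicates, it must be carried to the right-hand side so that comparing against $2\eta\epsilon|\A|=2\eta\epsilon(T-|\B|)$ cancels the $|\B|$ contributions and leaves the $|\B|$-free condition you verify.
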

\section{Verifying Experiments}\label{sec:exp}
In this section, we verify our theoretical results in terms of understanding the switching-based methods and their efficacy in solving \eqref{eq:mainproblem}. To this end, we consider a setup where $f$ and $g$ are random convex quadratic functions defined on $\R^{10}$. For all methods we use a constant step size that leads to the fastest convergence without facing any divergence issues, both in terms of loss value $f$ and infeasibility $[g]_+$. Further, we show the performance of the last iterate, i.e., all plots are the evaluation of $f(w_t)$ and $g(w_t)$.

\subsection{Observing the Skew-Symmetry Phenomenon}
First, we verify the result of Proposition \ref{propositionh:jacobian} regarding the inherent skew-symmetric properties of switching-based methods dynamics. To this end, we consider SSGM with the sigmoid smoothing function per \eqref{eq:approxODE} with different values of smoothing parameter $\beta$. Fig. \ref{fig:exact} (a) and (b) demonstrate the result of this experiment. Consistent with the result of Proposition \ref{propositionh:jacobian}, we observe that as the iterate approaches feasibility (around iteration 20), the dynamics becomes rotational in the senses that both $f$ and $g$ values oscillate around their respective equilibria. Furthermore, the rotation becomes more significant as $\beta$ increases. Both of these phenomena are consistent with Proposition \ref{propositionh:jacobian}.

\subsection{Comparison of Various Switching Methods}
Next, we compare various switching-based methods explored in this paper. In particular, we compare SGM \eqref{eq:SGM}, SSGM \eqref{eq:soft-SGM}, SPPM \eqref{eq:SPPM}, and SSPPM-E \eqref{eq:SSPPM-E} in terms of their loss and constraint values in Fig. \ref{fig:exact} (c) and (d). The figure demonstrates that SSGM and SSPPM-E enjoy faster and smoother convergence, in part thanks to the proposed soft switching mechanism. While SGM and SPPM further converge, their convergence is slower as they need a smaller stepsize to avoid divergence and large oscillation due to the hard switching mechanism employed by them. 

We further remark that SSGM and SSPPM-E enjoy favorable rates even when we use a small $\beta=1$ as the smoothing parameter. This is again consistent with our prior discussion on the fact that our bound on $\beta$ in Theorems \ref{thm:sgm3}--\ref{thm:sgm6} is pessimistic.

Finally, in Table \ref{table1} we estimate the empirical rates of convergence of these scheme for the setting explored in our experiments. To this end, we assume a power-law decay of the form $C t^{-\alpha}$ for the decay of loss and functions values $f$ and $g$ and estimate $\alpha$ by log-log regression. The table demonstrates that all methods seem to be significantly faster than the rates derived in Theorems \ref{thm:sgm1}--\ref{thm:sgm7}, particularly SSGM and SSPPM-E that leverage the proposed soft switching mechanism.
\begin{figure*}[t]
\centering
\begin{subfigure}[t]{0.24\textwidth}
    \includegraphics[width=\textwidth]{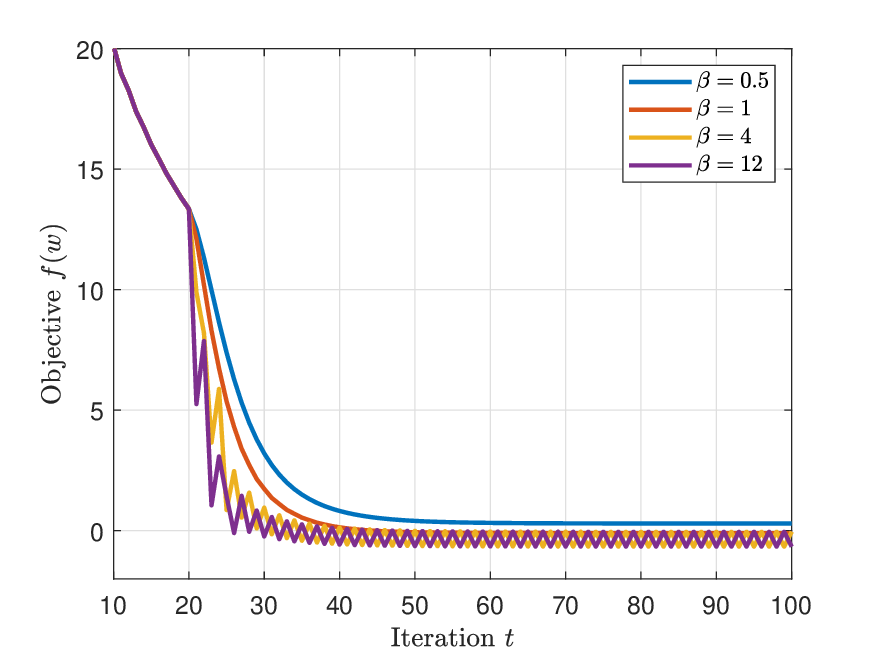}
    \caption{Skew Symmetric Dynamics:\\Impact of changing $\beta$ on $f$}
\end{subfigure}\hfill
\begin{subfigure}[t]{0.24\textwidth}
    \includegraphics[width=\textwidth]{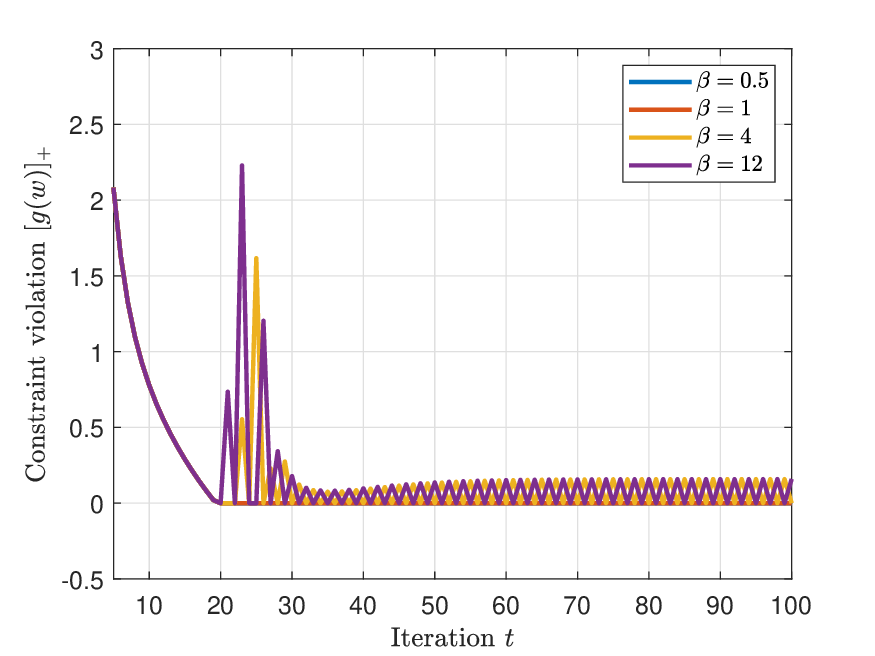}
    \caption{Skew Symmetric Dynamics:\\Impact of changing $\beta$ on $g$}
\end{subfigure}\hfill
\begin{subfigure}[t]{0.24\textwidth}
    \includegraphics[width=\textwidth]{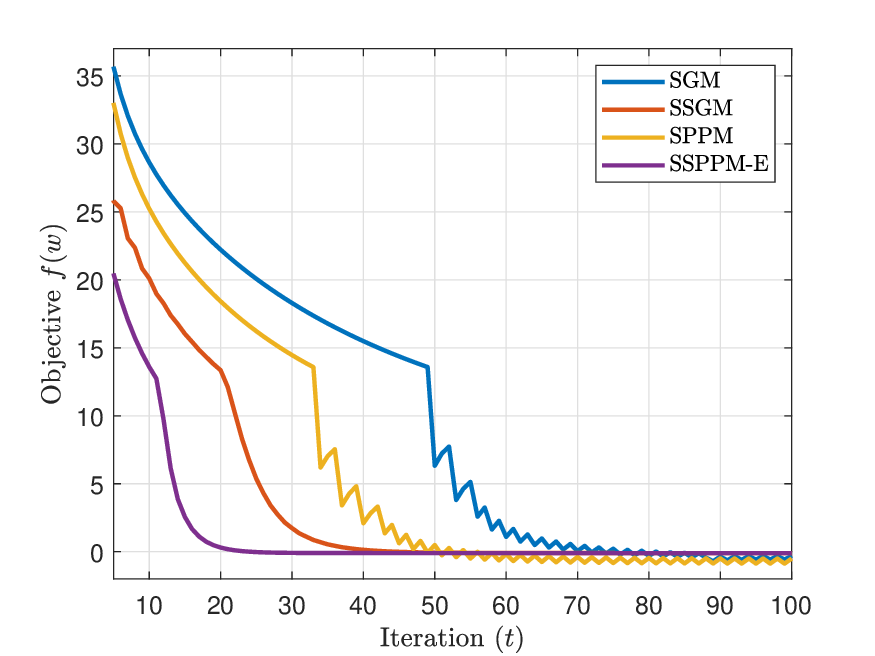}
    \caption{Convergence\\ Results for loss value $f$} 
\end{subfigure}\hfill
\begin{subfigure}[t]{0.24\textwidth}
    \includegraphics[width=\textwidth]{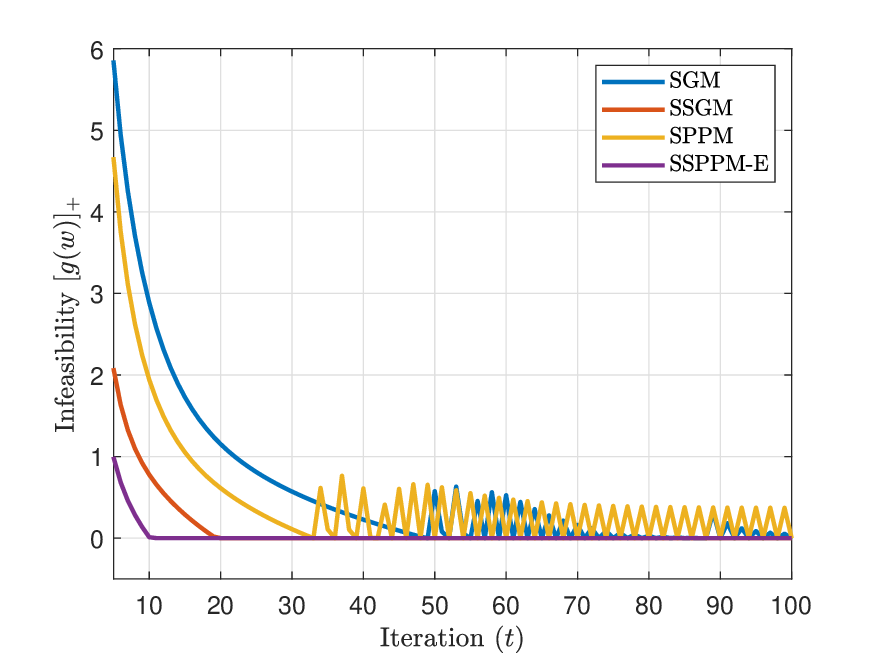}
    \caption{Convergence\\ Results constraint violation $[g]_+$} 
\end{subfigure}
\caption{Verifying experiments on an instance of \eqref{eq:mainproblem} with convex quadratic loss and constraint functions. \textbf{(a) and (b)} demonstrate the skew-symmetric nature of SGM and its smooth approximation \eqref{eq:approxODE} and its impact on values of $f$ and $g$. As the figure demonstrates, the skew-symmetric component becomes stronger as $|g(w)|$ approaches $\epsilon$ and $\beta$ increases, thereby verifying Proposition \ref{propositionh:jacobian}. \textbf{(c) and (d)} compare the loss and constraint values of various switching methods explored in this paper as a function of $t$, namely SGM \eqref{eq:SGM}, SSGM \eqref{eq:soft-SGM}, SPPM \eqref{eq:SPPM}, and SSPPM-E \eqref{eq:SSPPM-E}. For each method, we used the best constant step-size $\epsilon = 0.001$ and for SSGM and SSPPM-E we set $\beta=1$. The figure demonstrates SSGM and SSPPM-E, which incorporate the proposed soft mechanism, converge faster and smoother than their hard switching counterparts.}
\label{fig:exact}
\end{figure*}
\begin{table}[t]
    \centering
    \caption{Empirical rates of convergence of SGM \eqref{eq:SGM}, SSGM \eqref{eq:soft-SGM}, SPPM \eqref{eq:SPPM}, and SSPPM-E \eqref{eq:SSPPM-E} when assuming a power-law decay of the form $C t^{-\alpha}$. The table reports the $\alpha$ values.}
    \vspace{0.5em}
    \begin{tabular}{lcc}
        \toprule
        Algorithm & $\alpha$ for $f$ & $\alpha$ for $g$ \\
        \midrule
        SGM & 1.8 & 6.7 \\
        SSGM & 4.5 &  10\\
        SPPM & 2.5 &  6.3\\
        SSPPM-E & 6.8 & 7.4 \\
        \bottomrule
    \end{tabular}
    \label{table1}
\end{table}
\section{Conclusion}\label{sec:conclusion}
In this paper, we started by analyzing SGM in the smooth setting and established that it may not enjoy faster rates compared to the non-smooth setting. We attributed this result to the discontinuous and skew-symmetric dynamics of SGM. This perspective was then leveraged to design  Lipschitz dynamics by incorporating smoothing functions, notably the trimmed hinge. We then studied the convergence properties of the resulting methods when discretizing the dynamics by forward, backward, and approximate backward Euler discretizations. 

There are numerous interesting directions of research. Firstly, as we stated our bound for the smoothing parameter is highly pessimistic, and we conjecture it may be possible to derive a tighter bound resulting in a $\beta$ independent of $\epsilon$ and, equivalently, $T$.
Also, as the new methods seem superior to SGM, it would be of interest to extend them beyond the deterministic and convex setting studied in this paper by leveraging tools from \cite{asi2019stochastic,lan2020algorithms,huang2023oracle,liu2025single} which study extensions of SGM. Secondly, it is of interest to leverage the new methods in decision-making problems such as safe control and reinforcement learning \cite{xu2021crpo} particularly for cyber-physical systems, equilibrium computation and learning in games with functional constraints \cite{lauffer2023no,jordan2023first,jordan2024independent}, online learning and control with functional constraints \cite{rakhlin2013optimization,moonoptimistic}, and distributed optimization with functional constraints \cite{nedic2009distributed}. Finally, it is of interest to incorporate momentum mechanisms \cite{cutkosky2019momentum,tran2022hybrid,liu2020optimal,das2022faster} into the proposed methods to further improve their convergence properties.

\bibliographystyle{ieeetr}
\bibliography{refs}
\appendix
\section*{Appendix}
\section{Proof of Theorem \ref{thm:sgm1}}
We first state a well-known result.
    \begin{lemma}\label{lemma:young}
   For any vectors $a,b\in \R^d$ the polarization identity $ \langle a,b\rangle = \frac{1}{2} (\|a+b\|^2-\|a\|^2-\|b\|^2)$ holds.
\end{lemma}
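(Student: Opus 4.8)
The plan is to verify the identity directly by expanding a single squared norm, since in $\R^d$ the norm is the one induced by the standard inner product, i.e.\ $\|v\|^2 = \langle v, v\rangle$ for every $v$. The entire proof reduces to one expansion followed by an algebraic rearrangement, so I would carry it out explicitly rather than invoking any auxiliary result.

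First I would apply the identity $\|v\|^2 = \langle v, v\rangle$ to the vector $v = a+b$, writing $\|a+b\|^2 = \langle a+b,\, a+b\rangle$. Next I would expand the right-hand side using bilinearity of the inner product in each argument, obtaining $\langle a,a\rangle + \langle a,b\rangle + \langle b,a\rangle + \langle b,b\rangle$. Then, invoking the symmetry $\langle a,b\rangle = \langle b,a\rangle$ together with the identifications $\langle a,a\rangle = \|a\|^2$ and $\langle b,b\rangle = \|b\|^2$, this collapses to
\begin{equation}
  \|a+b\|^2 = \|a\|^2 + 2\langle a,b\rangle + \|b\|^2.
\end{equation}

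Finally I would isolate the cross term by subtracting $\|a\|^2 + \|b\|^2$ from both sides and dividing by $2$, which yields exactly $\langle a,b\rangle = \tfrac{1}{2}\big(\|a+b\|^2 - \|a\|^2 - \|b\|^2\big)$ as claimed. There is no genuine obstacle here: the statement is a one-line consequence of bilinearity and symmetry. The only point worth flagging is that the identity hinges on the norm being the one induced by the inner product, which is the standing convention throughout the paper; for an arbitrary (non-Euclidean) norm the equality would fail, so this is the sole assumption the argument quietly relies on.
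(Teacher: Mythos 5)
Your proof is correct: expanding $\|a+b\|^2 = \langle a+b, a+b\rangle$ by bilinearity and symmetry and rearranging is exactly the standard argument, and your remark that the identity requires the norm to be induced by the inner product is an accurate (and the only) caveat. The paper itself states this lemma as a well-known fact without proof, so there is nothing to contrast against; your one-line expansion is the canonical justification.
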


    We start by upperbounding $\langle u_t,w_t-w^\ast\rangle$ via leveraging Lemma \ref{lemma:young}
    \begin{equation}\label{thm1:keyeq}
        \begin{aligned}
            &\langle u_t,w_t-w^\ast\rangle = \frac{1}{\eta_t}\langle \eta_t u_t,w_t-w^\ast\rangle\\&=
            \frac{\|w_t-w^\ast\|^2+\eta_t^2\|u_t\|^2-\|w_t-w^\ast-\eta_t u_t\|^2}{2\eta_t}\\&= \frac{\|w_t-w^\ast\|^2+\eta_t^2\|u_t\|^2-\|w_{t+1}-w^\ast\|^2}{2\eta_t}\\&= \frac{D_t^2-D_{t+1}^2}{2\eta_t}+\frac{\eta_t\|u_t\|^2}{2},
        \end{aligned}
    \end{equation}
    where we used SGM's update \eqref{eq:SGM} and defined $D_t^2:= \|w_t-w^\ast\|^2$. Set $\eta_t = \eta = \frac{D}{G\sqrt{T}}$ where $D = D_1$. 
    Summing both sides of \eqref{thm1:keyeq} over $t$ yields
    \begin{equation}\label{thm1:keyeq1}
    \begin{aligned}
                \frac{1}{T}\sum_{t=1}^T  \langle u_t,w_t-w^\ast\rangle &= \frac{D_1^2-D_{T+1}^2}{2\eta T}+  \frac{\eta}{2T}\sum_{t=1}^T \|u_t\|^2 \\&\leq \frac{D_1^2}{2\eta T}+  \frac{\eta G^2}{2} = \frac{DG}{\sqrt{T}},
    \end{aligned}
    \end{equation}
    where we used $D_{T+1}^2 \geq 0$ and the fact that $u_t$ is the gradient of $f$ or $g$ and by the $G$-Lipschitzness assumption its norm is uniformly bounded by $G$.
    
    Let $\A = \{t\in [T]| g(w_t)\leq \epsilon\}$ and $\B = [T]\backslash \A = \{t\in [T]| g(w_t)>\epsilon\}$. By convexity of $f$ and $g$
    \begin{equation}\label{eq:defAB}
    \begin{aligned}
                &\langle u_t,w_t-w^\ast\rangle \geq f(w_t)- f(w^\ast),\qquad \forall t\in\A\\
                &\langle u_t,w_t-w^\ast\rangle \geq g(w_t)- g(w^\ast) \geq g(w_t) >\epsilon,\qquad \forall t\in\B
    \end{aligned}
    \end{equation}
    using the fact that $w^\ast$ is feasible, i.e. $ g(w^\ast)\leq 0$, and that $g(w_t)>\epsilon$ for all $t\in \B$. Consequently,
\begin{equation}\label{eq:sgmconstat1}
    \frac{DG}{\sqrt{T}}  \geq \frac{1}{T} \sum_{t\in \A} f(w_t)-f(w^\ast) +\frac{1}{T} \sum_{t\in \B} g(w_t)-g(w^\ast),.
\end{equation}
Note that when $\epsilon$ is sufficiently large $\A$ is nonempty. Assuming an empty $\A$, we can find the largest ``bad'' $\epsilon$:
\begin{equation}
    \frac{DG}{\sqrt{T}}  \geq \frac{1}{T} \sum_{t\in \B} g(w_t)-g(w^\ast) > \epsilon_{bad}.
\end{equation}
Thus, let us set $\epsilon = (C+1) \frac{DG}{\sqrt{T}} $ for some $C\geq 0$. With this choice, $\A$ is guaranteed to be nonempty.

Now, we consider two cases. Either $\sum_{t\in \A} f(w_t)-f(w^\ast) \leq 0$ which implies by convexity of $f$ and $g$ for $\bar{w} = \frac{1}{|\A|} \sum_{t\in \A} w_t$ we have
\begin{equation}\label{eq:thm1:result1}
  f(\bar{w})-f(w^\ast) \leq 0 <\epsilon,\qquad  g(\bar{w})\leq \epsilon.
\end{equation}
Otherwise, if $\sum_{t\in \A} f(w_t)-f(w^\ast) >0$
    \begin{equation}\label{eq:sgmconstat1new}
        \begin{aligned}
           \frac{DG}{\sqrt{T}}  &\geq \frac{1}{T} \sum_{t\in \A} f(w_t)-f(w^\ast) +\frac{1}{T} \sum_{t\in \B} g(w_t)-g(w^\ast)\\&>\frac{1}{T} \sum_{t\in \A} f(w_t)-f(w^\ast) +\frac{1}{T} \sum_{t\in \B} \epsilon\\&=\frac{|A|}{T}  \frac{1}{|A|}\sum_{t\in \A} f(w_t)-f(w^\ast) +(1-\frac{|A|}{T}) \epsilon\\&\geq\frac{|A|}{T} \big( f(\bar{w})-f(w^\ast) \big)+(1-\frac{|A|}{T}) \epsilon .
        \end{aligned}
    \end{equation}
By rearranging
\begin{equation}
  \frac{|A|}{T}  \Big(f(\bar{w})-f(w^\ast)-\epsilon\Big)  <\frac{DG}{\sqrt{T}} -\epsilon \leq -C\frac{DG}{\sqrt{T}},
\end{equation}
Implying $f(\bar{w})-f(w^\ast)<\epsilon$ and further by convexity of $g$ for $\bar{w} = \frac{1}{|\A|} \sum_{t\in \A} w_t$, we also have $g(\bar{w})\leq\epsilon$. 
\section{Proof of Theorem \ref{thm:sgm2}}
We first state a  well-known Lemma.
\begin{lemma}\label{lemma:smoothness}
    Let $f:\R^d\rightarrow \R$ be a $L$-smooth function and
    assume $\min_x f(x)>-\infty$. Then for all $x,y \in \R^d$ we have 
        \begin{equation}
        \|\nabla f(x)\|^2 \leq 2L (f(x)-\min_x f(x)).
    \end{equation}
\end{lemma}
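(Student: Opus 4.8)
The plan is to derive the bound as a direct consequence of the standard descent lemma for $L$-smooth functions. First I would record (or simply invoke) the smoothness inequality: for all $x,y\in\R^d$,
$$f(y) \leq f(x) + \langle \nabla f(x), y-x\rangle + \frac{L}{2}\|y-x\|^2.$$
This follows from the fundamental theorem of calculus, writing $f(y)-f(x)=\int_0^1 \langle \nabla f(x+s(y-x)),\,y-x\rangle\,ds$ and bounding the integrand via the $L$-Lipschitzness of $\nabla f$ (the definition of $L$-smoothness in the excerpt). Note that the conclusion involves only $x$, so the ``for all $y$'' in the statement is vacuous; I would establish the claim for an arbitrary fixed $x$.

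The key idea is to evaluate the descent lemma at the specific point $y = x - \tfrac{1}{L}\nabla f(x)$, that is, a single gradient step from $x$ with step size $1/L$. With this substitution the right-hand side collapses: the inner-product term contributes $-\tfrac{1}{L}\|\nabla f(x)\|^2$ and the quadratic term contributes $+\tfrac{1}{2L}\|\nabla f(x)\|^2$, yielding
$$f(y) \leq f(x) - \frac{1}{2L}\|\nabla f(x)\|^2.$$

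Finally, I would use the standing assumption that $f$ is bounded below, so that $\min_x f(x) \leq f(y)$ for this particular $y$. Chaining this with the previous display gives $\min_x f(x) \leq f(x) - \tfrac{1}{2L}\|\nabla f(x)\|^2$, and rearranging produces exactly $\|\nabla f(x)\|^2 \leq 2L\big(f(x)-\min_x f(x)\big)$.

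There is essentially no hard step here; this is a classical self-bounding (or gradient-mapping) inequality. The only thing worth ``discovering'' is the step-size choice $1/L$, which is precisely the minimizer of the scalar map $t\mapsto -t\|\nabla f(x)\|^2 + \tfrac{L t^2}{2}\|\nabla f(x)\|^2$ obtained from the descent lemma and thus makes the upper bound on $f(y)$ as tight as possible. The boundedness-below hypothesis is exactly what licenses replacing $f(y)$ by the global minimum value in the last step.
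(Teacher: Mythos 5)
Your proof is correct: the descent-lemma substitution $y = x - \tfrac{1}{L}\nabla f(x)$ followed by bounding $f(y)$ below by $\min_x f(x)$ is exactly the classical argument for this self-bounding inequality. The paper itself offers no proof---it invokes the lemma as well-known---so your write-up simply supplies the standard derivation the authors had in mind, and your side remark that the ``for all $y$'' quantifier is vacuous is also accurate.
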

Recall \eqref{thm1:keyeq} from the proof of Theorem \ref{thm:sgm1}. Here we will bound $\|u_t\|^2$ using Lemma \ref{lemma:smoothness}:
\begin{equation}
    \|u_t\|^2\leq 2L \times \begin{cases}
        f(w_t)-\Tilde{f},\qquad t\in \A\\ g(w_t)-\Tilde{g},\qquad t\in \B.
    \end{cases}
\end{equation}
Note that $\Tilde{f}\leq f(w^\ast)$ and $\Tilde{g}\leq g(w^\ast) \leq 0$ such that $|\Tilde{g}|\geq |g(w^\ast)|$.
We have by convexity of $f$ and $g$ and definition of $\A$ and $\B$ and \eqref{eq:defAB}
    \begin{equation}\label{thm2:keyeq1}
    \begin{aligned}
               \sum_{t\in \A} f(w_t)-f(w^\ast)+\sum_{t\in \B} g(w_t)-g(w^\ast)&\leq \frac{D^2}{2\eta}+ L\eta\sum_{t\in \A} (f(w_t)-\Tilde{f}\pm f(w^\ast))\\&\qquad+ L\eta\sum_{t\in \B} (g(w_t)-\Tilde{g}\pm g(w^\ast)),
    \end{aligned}
    \end{equation}
    and by rearranging
        \begin{equation}\label{thm2:keyeq2}
    \begin{aligned}
               &\Bigg[\sum_{t\in \A} f(w_t)-f(w^\ast)+\sum_{t\in \B} g(w_t)-g(w^\ast)\Bigg](1-\eta L)\\&\leq \frac{D^2}{2\eta}+ L\eta\sum_{t\in \A} (f(w^\ast)-\Tilde{f})+ L\eta\sum_{t\in \B} (g(w^\ast)-\Tilde{g})\\&= \frac{D^2}{2\eta }+ L\eta \Big[|\A|(f(w^\ast)-\Tilde{f})+|\B|(g(w^\ast)-\Tilde{g})\Big]\\&\leq \frac{D^2}{2\eta}+ L\eta T \Delta_{\max}.
    \end{aligned}
    \end{equation}
    Now, recall $ \eta = \min\Big\{\frac{1}{2L}, \sqrt{\frac{D^2}{2L\Delta_{\max}T}}\big\}$
    such that $1-\eta L \geq 1/2$. With this choice we arrive at
    \begin{equation}
    \begin{aligned}
                \frac{1}{T}\sum_{t\in \A} f(w_t)-f(w^\ast)+\frac{1}{T}\sum_{t\in \B} g(w_t)-g(w^\ast)    \leq\frac{D^2}{\eta T}+ 2L\eta \Delta_{\max}.
    \end{aligned}
    \end{equation}
    Note that if $\frac{1}{2L}\leq \sqrt{\frac{D^2}{2L\Delta_{\max}T}}$ such that $\eta = 1/2L$ we have
    \begin{equation}
       2L\eta \Delta_{\max} \leq 2L \Delta_{\max} \sqrt{\frac{D^2}{2L\Delta_{\max}T}} = \sqrt{\frac{2LD^2\Delta_{\max}}{T}},
    \end{equation}
    and thus
        \begin{equation}
        \begin{aligned}
                    \frac{1}{T}\sum_{t\in \A} f(w_t)-f(w^\ast)+\frac{1}{T}\sum_{t\in \B} g(w_t)-g(w^\ast)  \leq\frac{2LD^2}{T}+ \sqrt{\frac{2LD^2\Delta_{\max}}{T}}.
        \end{aligned}
    \end{equation}
    On the other hand, if $\frac{1}{2L}\geq \sqrt{\frac{D^2}{2L\Delta_{\max}T}}$ such that $\eta = \sqrt{\frac{D^2}{2L\Delta_{\max}T}}$ the upper bound becomes $2\sqrt{\frac{2LD^2\Delta_{\max}}{T}}$.
    Thus, in both cases,
            \begin{equation}
            \begin{aligned}
                        \frac{1}{T}\sum_{t\in \A} f(w_t)-f(w^\ast)+\frac{1}{T}\sum_{t\in \B} g(w_t)-g(w^\ast)  \leq\frac{2LD^2}{T}+2\sqrt{\frac{2LD^2\Delta_{\max}}{T}}.
            \end{aligned}
    \end{equation}
    It is evident that we are now back to a setup analogous to the proof of Theorem \ref{thm:sgm1} (i.e., \eqref{eq:sgmconstat1}), but with a different bound. Accordingly, setting 
    \begin{equation}
        \epsilon = \frac{2LD^2}{T}+2\sqrt{\frac{2LD^2\Delta_{\max}}{T}},
    \end{equation}
    and following an identical argument furnishes the proof.
\section{Proof of Theorem \ref{thm:sgm3}}
Recall the update in \eqref{eq:soft-SGM}. Since $0\leq\sigma_\beta(\cdot)\leq 1$ and $f$ and $g$ are $G$-Lipschitz, we have $\|F(w_t)\| \leq G$. Thus, utilizing the analysis in the proof of Theorem \ref{thm:sgm1} and notably \eqref{thm1:keyeq1} and setting $\eta = \frac{D}{G\sqrt{T}}$ we have $\sum_{t=1}^T  \langle F(w_t),w_t-w^\ast\rangle \leq DG\sqrt{T}$.

    Let $\A = \{t\in [T]| g(w_t)< \epsilon\}$ and $\B = [T]\backslash \A = \{t\in [T]| g(w_t)\geq \epsilon\}$. Note that for all $t\in \B$ it holds that $\sigma_\beta(g(w_t)-\epsilon) = 1$ and $g(w_t)-g(w^\ast)\geq \epsilon$. Further, for all $t\in \A$ if $\sigma_\beta(g(w_t)-\epsilon) \geq 0$ it holds that $g(w_t)-g(w^\ast)\geq g(w_t)\geq\epsilon -1/\beta$. With these observations, using convexity of $f$ and $g$ and decomposing the sum over $t$ according to the definitions of $\A$ and $\B$ yields
    \begin{equation}\label{eq:thm3:keyeq2}
        \begin{aligned}
           DG\sqrt{T}&\geq  \sum_{t\in \A} \sigma_\beta(g(w_t)-\epsilon) \big(g(w_t)-g(w^\ast)\big)\\&\quad+\sum_{t\in \A} \big(1-\sigma_\beta(g(w_t)-\epsilon)\big) \big(f(w_t)-f(w^\ast)\big)\\&\quad +\sum_{t\in \B} g(w_t)-g(w^\ast)\\&\geq\sum_{t\in \A}\big(1-\sigma_\beta(g(w_t)-\epsilon)\big) \big(f(w_t)-f(w^\ast)\big)\\&\quad + \epsilon |\B|+ \left(\epsilon-\frac{1}{\beta}\right)\sum_{t\in \A} \sigma_\beta(g(w_t)-\epsilon).
        \end{aligned}
    \end{equation}
Similar to the previous proofs, we first need to find the smallest $\epsilon$ to ensure $\A$ is non-empty. 

To find a lower bound on $\epsilon$ assume $\A$ is empty in \eqref{eq:thm3:keyeq2} and observe that as long as condition $\frac{DG}{\sqrt{T}} < \epsilon $ is met, $\A$ is non-empty. We choose to set $\epsilon = \frac{2DG}{\sqrt{T}}$.

Now like before we consider two cases based on the sign of $\sum_{t\in \A} \big(1-\sigma_\beta(g(w_t)-\epsilon)\big) \big(f(w_t)-f(w^\ast)\big)$. As before, when the sum is non-positive we are done by the definition of $\A$ which implies $0<1-\sigma_\beta(g(w_t)-\epsilon)\leq 1$ and the convexity of $f$ and $g$. 

Assuming the sum is positive, dividing  \eqref{eq:thm3:keyeq2} by $\sum_{t\in \A} \big(1-\sigma_\beta(g(w_t)-\epsilon)\big)$ (which by the definition of $\A$ is strictly positive), using convexity of $f$, and the definition of $\bar{w}$ we have
\begin{equation}
\begin{aligned}
       f(\bar{w})-f(w^\ast) &\leq  \frac{0.5\epsilon T - \epsilon |\B|-(\epsilon-\frac{1}{\beta})\sum_{t\in \A} \sigma_\beta(g(w_t)-\epsilon)}{|\A| - \sum_{t\in \A} \sigma_\beta(g(w_t)-\epsilon)}\\&=\epsilon+\frac{-0.5\epsilon T+\beta^{-1}\sum_{t\in \A} \sigma_\beta(g(w_t)-\epsilon)}{|\A| - \sum_{t\in \A} \sigma_\beta(g(w_t)-\epsilon)},
\end{aligned}
\end{equation}
where we used $|\B| = T-|\A|$.

Let us now find a lower bound on $\beta$ to ensure the second term in the bound is non-positive. Note this is done for simplicity, and as long as the second term is $\O(\epsilon)$, an $\epsilon$-solution can be found. 

Immediate calculations show  the second term in the bound is non-positive when
\begin{equation}
    \beta \geq \frac{2\sum_{t\in \A} \sigma_\beta(g(w_t)-\epsilon)}{\epsilon T}.
\end{equation}
Since $\sum_{t\in \A} \sigma_\beta(g(w_t)-\epsilon) <T$, a sufficient (and highly conservative) condition for all $T\geq 1$ is to set $\beta = 2/\epsilon$. 
Thus, we proved the suboptimality gap result. The feasibility result is immediate given the definition of $\A$ and the convexity of $g$.
\section{Proof of Theorem \ref{thm:sgm5}}
Using Lemma \ref{lemma:young} and the update of SSPPM in \eqref{eq:SSPPM} we have
\begin{equation}
\begin{aligned}
     \langle F(w_{t+1}),w_{t+1}-w^\ast\rangle &= \frac{\|w_{t+1}-w^\ast-\eta F(w_{t+1})\|^2}{2\eta}\\&\qquad\frac{-\|\eta F(w_{t+1})\|^2-\|w_{t+1}-w^\ast\|^2}{2\eta}\\&=\frac{\|w_t-w^\ast\|^2-\|w_{t+1}-w^\ast\|^2}{2\eta}\\&\qquad-\frac{\|w_t-w_{t+1}\|^2}{2\eta}.
\end{aligned}
\end{equation}
Let $w_0 = w_1$. Summing the above result from $t=0$ to $T-1$, change of summand variable  and dropping the negative terms involving $\sum_t\|w_t-w_{t+1}\|^2$ and $\|w_T-w^\ast\|^2$ yields
\begin{equation}
    \sum_{t=1}^T\langle F(w_t),w_t-w^\ast\rangle \leq  \frac{D^2}{2\eta}.
\end{equation}
Thus, we are back to the setting of proof of Theorem \ref{thm:sgm3}. Following identical steps and setting $\epsilon = \frac{D^2}{T\eta}$ and $\beta = \frac{2}{\epsilon}$ furnishes the proof.
\section{Proof of Proposition \ref{prop:lip}}
We will abuse the notation slightly and use $\sigma_\beta(x)$ to refer to $\sigma_\beta(g(x)-\epsilon)$ in this proof. By expanding and regrouping, we have
\begin{equation}
\begin{aligned}
    F(x)& - F(y) 
= (1 - \sigma_\beta(x)) \nabla f(x) + \sigma_\beta(x) \nabla g(x) \\&- (1 - \sigma_\beta(y)) \nabla f(y) - \sigma_\beta(y) \nabla g(y) \\
&= (1 - \sigma_\beta(x)) \left[\nabla f(x) - \nabla f(y)\right] + \sigma_\beta(x) \left[\nabla g(x) - \nabla g(y)\right] \\
&\quad + (\sigma_\beta(y) - \sigma_\beta(x)) \left[\nabla g(y) - \nabla f(y)\right].
\end{aligned}
\end{equation}
Taking norms and applying the triangle inequality yields
\begin{equation}
    \begin{aligned}
\|F(x) - F(y)\|
&\le |1 - \sigma_\beta(x)| \cdot \|\nabla f(x) - \nabla f(y)\| \\&+ |\sigma_\beta(x)| \cdot \|\nabla g(x) - \nabla g(y)\| \\
&\quad + |\sigma_\beta(x) - \sigma_\beta(y)| \cdot \|\nabla g(y) - \nabla f(y)\|.
\end{aligned}
\end{equation}
The first and second term in the equation above are readily bounded by using the smoothness of $f$ and $g$ and the fact that $1\ge \sigma_\beta(x)\ge 0$. The term $\|\nabla g(y) - \nabla f(y)\|$ is also bounded by $2G$ using the Lipschitzness of $f$ and $g$. Now, with our abuse of notation and the non-expansiveness of convex projection
\begin{equation}
\begin{aligned}
        |\sigma_\beta(x) - \sigma_\beta(y)| &\leq |1+\beta (g(x)-\epsilon)-1-\beta (g(y)-\epsilon)|\\&\leq\beta G \|x-y\|.
\end{aligned}
\end{equation}
\section{Proof of Theorem \ref{thm:sgm6}}
Recall the update of SSPPM-E in \eqref{eq:SSPPM-E}. As $h$ is convex, by the first-order optimality condition 
\begin{equation}\label{eq:sppm-update2}
    w_{t+1} = w_t - \eta \nabla h_t(w_{t+1}).
\end{equation}
Now, for each $t$ note that by the definition of $h_t$ we can write
\begin{equation}\label{eq:thm6:main}
    \begin{aligned}
       h_t(w_t) - h_t(w^\ast) &= \underbrace{h_t(w_t) - h_t(w_{t+1})}_{(\text{I)}} + \underbrace{h_t(w_{t+1}) - h_t(w^\ast)}_{\text(II)}
    \end{aligned}
\end{equation}
We can use the $G$-Lipschitzness of $f$ and $g$ as well as the boundedness of $\sigma_\beta$ and the update in \eqref{eq:sppm-update2} to bound the $(I)$ of \eqref{eq:thm6:main} according to 
\begin{equation}
    \begin{aligned}
        &h_t(w_t) - h_t(w_{t+1}) = |h_t(w_t) - h_t(w_{t+1}))| 
        \\& =|[1-\sigma_\beta(g(w_t)-\epsilon)] (f(w_t)-f(w_{t+1}))+ \sigma_\beta(g(w_t)-\epsilon) (g(w_t)-g(w_{t+1}))|
        \\&\leq G\|w_t-w_{t+1}\| \\&= G \|\eta \nabla h_t(w_{t+1})\| \leq \eta G^2.
    \end{aligned}
\end{equation}
Thus, overall $(I)$ is
\begin{equation}
    \begin{aligned} 
        &\sum_{t=1}^T h_t(w_t) - h_t(w_{t+1}) \leq T\eta G^2.
    \end{aligned}
\end{equation}
Next, we will bound $(II)$ on the right hand side of \eqref{eq:thm6:main} using \eqref{eq:sppm-update2} and Lemma \ref{lemma:young}:
\begin{equation}
\begin{aligned}
   &\langle \nabla h_t(w_{t+1}),w_{t+1}-w^\ast\rangle =-\frac{\|w_{t+1}-w^\ast\|^2}{2\eta}\\&\quad-\frac{\|\eta \nabla h_t(w_{t+1})\|^2}{2\eta}+\frac{\|w_{t+1}-w^\ast+\eta \nabla h_t(w_{t+1})\|^2}{2\eta}\\&=\frac{\|w_t-w^\ast\|^2-\|w_{t+1}-w^\ast\|^2-\|w_t-w_{t+1}\|^2}{2\eta}.
\end{aligned}
\end{equation}
Summing the above result from $t=1$ to $T$ and dropping the negative terms involving $\sum_t\|w_t-w_{t+1}\|^2$ and $\|w_{T+1}-w^\ast\|^2$ yields
\begin{equation}
    \sum_{t=1}^T\langle \nabla h_t(w_{t+1}),w_{t+1}-w^\ast\rangle \leq  \frac{D^2}{2\eta}.
\end{equation}
Clearly, by convexity and the definition of $h_t$
\begin{equation}
\begin{aligned}
        \sum_{t=1}^T\langle \nabla h_t(w_{t+1}),w_{t+1}-w^\ast\rangle \geq  \sum_{t=1}^Th_t(w_{t+1}) - h_t(w^\ast).
\end{aligned}
\end{equation}
Thus, we are back to the setting of proof of Theorem \ref{thm:sgm3}. Following identical steps and setting $\eta = \frac{D}{G\sqrt{2T}}$, $\epsilon = \frac{2\sqrt{2}DG}{\sqrt{T}}$, and $\beta = \frac{2}{\epsilon}$ furnishes the proof.
\end{document}